\theoremstyle{plain}
\newtheorem{theorem}{Theorem}[section]
\newtheorem{lemma}[theorem]{Lemma}
\newtheorem{corollary}[theorem]{Corollary}
\theoremstyle{definition}
\newtheorem{definition}[theorem]{Definition}
\newtheorem{notation}[theorem]{Notation}
\newtheorem{remark}[theorem]{Remark}
\newcommand{\F}{\mathbb{F}}
\newcommand\mb{\mathbf}      
\newcommand{\vp}{\varphi}
\newcommand{\czf}{{\sf CZF}}
\newcommand{\izf}{{\sf IZF}}
\newcommand{\ac}{{\sf AC}}
\newcommand{\cac}{{\sf CAC}}
\newcommand{\dc}{{\sf DC}}
\newcommand\ft{{\sf FT}}
\newcommand{\markov}{{\sf MP}}
\newcommand{\ip}{{\sf IP}}
\newcommand{\rdc}{{\sf RDC}}
\newcommand{\pax}{{\sf PAx}}
\newcommand{\fo}{\Vdash}   
\newcommand{\ap}{\mb p}  
\newcommand{\pair}[1]{\langle{#1}\rangle}
\newcommand{\va}{\mathrm{V_{ex}}(A)}
\newcommand{\vset}[1]{\{{#1}\}_{A}}
\newcommand{\vpair}[1]{\langle{#1}\rangle_{\!A}}
\DeclareMathOperator{\ps}{\mathcal{P}}
\DeclareMathOperator{\succe}{\mb{succ}}
\DeclareMathOperator{\pred}{\mb{pred}}
\newcommand{\imp}{\rightarrow}
\newcommand{\biimp}{\leftrightarrow}
\newcommand{\tuep}[2]{ {#1}^{#2}}
\newcommand{\McCarty}{generic\  }
\newcommand{\GMcCarty}{Generic\  }
\title[Extensional realizability for set theory]{Extensional realizability for  intuitionistic set theory}
\thanks{The authors' research was supported by a grant from John Templeton Foundation \lq\lq A new dawn of Intuitionism: Mathematical and Philosophical advances\rq\rq\ (grant ID 60842).}
\author[Frittaion]{Emanuele Frittaion}
\address{School of Mathematics, University of Leeds, UK}
\author[Rathjen]{Michael Rathjen}
\address{School of Mathematics, University of Leeds, UK}
\keywords{Intuitionistic, constructive, set theory, realizability, extensionality}
\begin{document}
	
\subjclass[2010]{Primary: 03F03; Secondary: 03F25, 03F50}
	
\begin{abstract} In generic realizability for set theories, realizers treat unbounded quantifiers generically. To this form of realizability, we add another layer of extensionality 
by requiring that realizers ought to act extensionally on realizers, giving rise to a realizability universe $\va$ in which the axiom of choice in all finite types, $\ac_{\ft}$,  is  realized, where $A$ stands for an arbitrary  partial combinatory algebra. This construction furnishes ``inner models'' of many set theories that additionally validate $\ac_{\ft}$, in particular
it provides a self-validating semantics for $\czf$ (Constructive Zermelo-Fraenkel set theory) and $\izf$ (Intuitionistic Zermelo-Fraenkel set theory).  
One can also add large set axioms and many other principles.
\end{abstract}
	
\maketitle
\tableofcontents

\section{Introduction}
In this paper we define an extensional version of  generic\footnote{The descriptive attribute ``generic'' for this kind of realizability is due to  McCarty \cite[p.\ 31]{M84}.}
realizability over any given partial combinatory algebra (pca) and prove that it  provides a self-validating semantics for $\czf$ (Constructive Zermelo-Fraenkel set theory) as well as $\izf$ (Intuitionistic Zermelo-Fraenkel set theory), i.e., every theorem of $\czf$ ($\izf$) is realized by just assuming the axioms of $\czf$ ($\izf$)  in the background theory. Moreover, it is shown that the axiom of choice in all finite types, $\ac_{\ft}$, also holds under this interpretation.\footnote{As a byproduct,  we reobtain the already known result (e.g. \cite[4.31, 4.33]{R03m}) that augmenting $\czf$   by $\ac_{\ft}$ does not  increase the stock of provably recursive functions. Likewise, we reobtain the result (a consequence of  \cite{friedman73}) that augmenting $\izf$   by  $\ac_{\ft}$ does not  increase the stock of provably recursive functions.} 
This uniform tool of realizability can be combined  with forcing to show that $\izf+\ac_{\ft}$ is conservative over $\izf$ with respect to arithmetic formulae (and similar results with large set axioms).  For special cases, namely, finite type dependent choice, $\dc_{\ft}$, and finite type countable choice, $\cac_{\ft}$,\footnote{$\dc_{\ft}$ is the scheme $\forall x^{\sigma}\,\exists y^{\sigma}\,\vp(x,y)\to \forall x^{\sigma}\, \exists f^{0\sigma}\,[ f(0)=x\;\wedge\;
\forall n\,\vp(f(n),f(n+1))]$, while  $\cac_{\ft}$ stands for the scheme $\forall n\, \exists y^{\tau}\, \vp(n,y)\to \exists f^{0\tau}\, \forall n\,\vp(n,f(n))$.}  this has been shown  in  \cite[Theorem 5.1]{friedman_scedrov84} and \cite[XV.2]{B85}, but not for  $\ac_{\ft}$. 
The same technology works for $\czf$. However, for several subtheories of $\czf$ (with exponentiation in lieu of subset collection) such  conservativity results  have already  been obtained by Gordeev \cite{gordeev} by very different methods, using total combinatory algebras and an abstract form of realizability combined with genuine 
proof-theoretic machinery. 

\GMcCarty realizability is
markedly  different from Kleene's number and function realizability as well as modified realizability.  It  originates with Kreisel's and Troelstra's \cite{KTr70} definition of realizability
for second order Heyting arithmetic and the theory of species.
Here, the clauses for the realizability relation $\Vdash$ relating
to second order quantifiers are the following: $e\Vdash \forall
X\, \phi(X)\Leftrightarrow \forall X\,e\Vdash \phi(X)$, $e\Vdash
\exists X\, \phi(X)\Leftrightarrow \exists X\,e\Vdash \phi(X)$. This
type of realizability does not seem to give any constructive
interpretation to set quantifiers; realizing numbers ``pass
through" quantifiers. However, one could also say that thereby the
collection of sets of natural numbers is generically conceived.
Kreisel-Troelstra realizability was applied to systems of higher
order arithmetic  and set theory by Friedman \cite{F73} and to further set theories by Beeson \cite{B79}. 
An immediate descendant of the interpretations of Friedman and Beeson was used by McCarty \cite{M84,M86}, who, unlike the realizabilities of Beeson, devised realizability directly for extensional $\izf$: {\em ``we found it a nuisance to interpret the extensional theory into the intensional before realizing." } (\cite[p.\ 82]{M84}).  A further generalization, inspired by a remark of Feferman in \cite{F75},  that McCarty introduced was that he used realizers from applicative structures, i.e. arbitrary models  of Feferman's theory $\mathrm{APP}$, rather than just natural numbers.

\GMcCarty realizability \cite{M84} is based on the construction of a realizability universe $\mathrm{V}(A)$ on top of an applicative structure or partial combinatory algebra $A$.  Whereas in \cite{M84,M86}
the approach is geared towards $\izf$, making  use of transfinite iterations of 
the powerset operation,
it was shown in \cite{R06}  that $\czf$ suffices for a formalization of  $\mathrm{V}(A)$ and the generic realizability based upon it.  This tool has been successfully applied to the proof-theoretic analysis  of  $\czf$ ever since \cite{R05a, R05, R08, S14}. 

With regard to $\ac_{\ft}$, it is perhaps worth mentioning that, by using   \McCarty realizability \cite{M84},   one can show that  $\ac_{0,\tau}$ for $\tau\in\{0,1\}$ holds in the realizability universe $\mathrm{V}(A)$ for  any pca $A$ (cf.\ also \cite{DR19}). For instance, one can take Kleene's first algebra.  With some effort, one can also see that $\ac_{1,\tau}$ for $\tau\in\{0,1\}$ holds in $\mathrm{V}(A)$ by taking, e.g.,  Kleene's second algebra. It is conceivable that one can construct a  specific pca $A$ so as to validate $\ac_{\ft}$ in $\mathrm{V}(A)$. In this paper we show that,  by building extensionality into the realizability universe and by  adapting the definition of realizability, it is possible to satisfy choice in all finite types at once, regardless  of the partial  combinatory algebra $A$ one starts with.

Extensional variants of realizability in the context of (finite type) arithmetic have been investigated by Troelstra (see \cite{T98}) and  van Oosten  \cite{ Oosten97,Oosten08}, as well as \cite{F19, BS18}, and for both arithmetic and set theory by Gordeev in \cite{gordeev}. For earlier references on extensional realizability, in particular \cite{Grayson}, where the notion for first order arithmetic  first appeared,  and \cite{Pitts},  see Troelstra \cite[p.\ 441]{T98}.

\section{Partial combinatory algebras}
Combinatory algebras are the brainchild of Sch\"onfinkel \cite{Schoen24} who presented his ideas in G\"ottingen in 1920. The quest for an optimization of his framework, singling out  
a minimal set of axioms,  engendered much work and writings from 1929 onwards, notably by Curry \cite{Curry29,Curry30}, under the heading of {\em combinatory logic}. 
Curiously, a very natural generalization of Sch\"onfinkel's structures, where the application operation is not required to be always defined, was  axiomatically characterized  only  in 1975 by Feferman in the shape of the most basic axioms of his theory $T_0$ of explicit mathematics \cite{F75}\footnote{In the literature, this subtheory of $T_0$ has been christened $\mathrm{EON}$ (for {\em elementary theory of operations and numbers}; see \cite[p.\ 102]{B85}) and  $\mathrm{APP}$ (on account of comprising the  {\em applicative axioms} of $T_0$; see \cite[Chapter 9, Section 5]{TvD88}). However, to be precise let us point out that $T_0$ as formulated in  \cite{F79} differs from the original formulation in \cite{F75}:
\cite{F79} has a  primitive classification constant $\mathbb N$ for the natural numbers as well as constants for successor and predecessor on $\mathbb N$, and more crucially,  equality is not assumed to be decidable and the definition-by-cases operation is restricted to $\mathbb N$.}
and in \cite[p.\ 70]{F78a}. Feferman called these structures {\em applicative structures}. 

\begin{notation} In order to introduce the notion of a pca, we shall start with that of a partial operational structure $(M,\cdot)$, 
 where $\cdot$ is just a partial binary operation on $M$. We use
$a\cdot b\simeq c$ to convey that $a\cdot b$ is defined and equal to $c$. $a\cdot b\downarrow $ stands for $\exists c\, (a\cdot b\simeq c)$. 
In what follows, instead of $a\cdot b$ we will just write $ab$. We also employ the association to the left convention, meaning that e.g. 
 $abc\simeq d$  stands for the following: there exists $e$ such that $ab\simeq e$ and $ec\simeq d$.
\end{notation}

\begin{definition}
A {\em partial  combinatory algebra}  (pca) is a partial operational structure $(A,\cdot)$  such that $A$ has at least two elements and there are elements 
$\mb k$ and $\mb s$ in $A$ such that $\mb k a$, $\mb sa$ and  $\mb sab$ are always defined, and 
\begin{itemize}
	\item $\mb ka b\simeq a$;
	\item $\mb sabc\simeq ac(bc)$.
\end{itemize}
The combinators $k$ and $s$ are due   to Sch\"onfinkel \cite{Schoen24}   while the axiomatic treatment, although formulated just in the total case,  is due to Curry \cite{Curry30}. The word ``combinatory" appears because of a property known as {\em combinatory completeness} described  next. For more information on pcas see \cite{F75,F79, B85,Oosten08}.
\end{definition}

\begin{definition}
Given a pca $A$, one can form application terms over $A$ by decreeing that: 
\begin{enumerate}[(i)]
	\item  variables $x_1,x_2,\ldots$ and  the constants $\mb k$ and $\mb s$ are applications terms over $A$;  
	\item   elements of $A$ are application terms over $A$; 
	\item given application terms $s$ and $t$ over $A$, $(ts)$ is also an application term over $A$.
\end{enumerate}
 Application terms over $A$ will also be called $A$-terms. Terms generated solely by clauses (i)--(iii), will be called {\em application terms}. 

An $A$-term $q$ without free variables has an obvious interpretation $q^A$ in $A$ by interpreting elements of $A$ by themselves and letting $(ts)^A$ be $t^A\cdot s^A$ with $\cdot$ being
the partial operation of $A$. Of course, $q$ may fail to denote an element of $A$. We write $A\models q\downarrow$ (or just $ q\downarrow$) if it does, i.e., if $q^A$ yields an element of $A$.
\end{definition}

The combinatory completeness of a pca $A$   is encapsulated in  $\lambda$-abstraction (see \cite[p.\ 95]{F75}, \cite[p.\ 63]{F79}, and \cite[p.\ 101]{B85} for more details).

\begin{lemma}[$\lambda$-abstraction]
For every term $t$ with variables among the distinct variables $x,x_1,\ldots,x_n$, one can find in an effective way a new term $s$, denoted $\lambda x.t$, such that  
\begin{itemize}
	\item the variables of $s$ are the variables of $t$ except for $x$, 
	\item $s[a_1/x_1,\ldots,a_n/x_n]\downarrow$ for all $a_1,\ldots,a_n\in A$,
	\item  $(s[a_1/x_1,\ldots,a_n/x_n]) a\simeq t[a/x,a_1/x_1,\ldots,a_n/x_n]$ for all $a,a_1,\ldots,a_n\in A$.
\end{itemize}

The term $\lambda x.t$ is built solely with the aid of $\mb k, \mb s$ and symbols occurring in $t$.
\end{lemma}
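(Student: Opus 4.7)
The plan is to define $\lambda x.t$ by recursion on the structure of the application term $t$, following the classical Sch\"onfinkel--Curry recipe. There are three cases: if $t$ is the variable $x$, set $\lambda x.t := \mb{skk}$; if $t$ is any atomic term distinct from $x$ (a different variable, one of the constants $\mb k, \mb s$, or an element of $A$), set $\lambda x.t := \mb k\, t$; and if $t = (t_1 t_2)$, set $\lambda x.t := \mb s\,(\lambda x.t_1)(\lambda x.t_2)$. The construction is visibly effective, and by an immediate syntactic induction the free variables of $\lambda x.t$ are exactly those of $t$ with $x$ removed, while the only new symbols introduced are $\mb k$ and $\mb s$.

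The semantic content splits into two claims, proved simultaneously by induction on $t$: (a) after substituting $a_1,\ldots,a_n\in A$ for the remaining free variables, the resulting closed $A$-term denotes an element of $A$; and (b) applying that element to any further $a\in A$ yields the same result, in the sense of Kleene equality $\simeq$, as $t[a/x, a_1/x_1,\ldots,a_n/x_n]$. Claim (a) is the raison d'\^etre for wrapping everything in $\mb s$ and $\mb k$: in the identity case, $\mb{skk}$ denotes because $\mb s\mb k\mb k$ is always defined by the axiom on $\mb s$; in the constant-function case, $\mb k\,t'$ denotes whenever $t'$ does, and after substitution the atomic term $t'$ lies in $A$; and in the application case, one appeals to the inductive hypothesis to obtain two defined values $p, q \in A$, whence $\mb s p q$ is defined by the pca axioms.

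For claim (b), use the defining equations of $\mb k$ and $\mb s$. In the identity case, $\mb{skk}\,a \simeq \mb k a(\mb k a) \simeq a$. In the atomic-distinct case, $(\mb k t')\,a \simeq t'$, and since $x$ does not occur in $t$ the right-hand side is literally $t[a/x,a_1/x_1,\ldots,a_n/x_n]$ after interpreting the substituted atomic term. In the application case, writing $p := (\lambda x.t_1)[a_1/x_1,\ldots,a_n/x_n]$ and $q := (\lambda x.t_2)[a_1/x_1,\ldots,a_n/x_n]$, the computation $\mb s p q\,a \simeq p a(q a) \simeq t_1[a/x,\vec a/\vec x]\cdot t_2[a/x,\vec a/\vec x] = (t_1 t_2)[a/x,\vec a/\vec x]$ closes the induction, the middle $\simeq$ being the inductive hypothesis for $t_1$ and $t_2$. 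The only mildly delicate point is to propagate Kleene equality correctly in this last chain: when some subterm $t_i[a/x,\vec a/\vec x]$ fails to denote, the axiom $\mb s a b c \simeq a c(b c)$ must be read as strict equality of partial terms, so that $\mb s p q a$ also fails to denote. Beyond this bookkeeping I do not anticipate any real obstacle.
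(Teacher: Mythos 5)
Your construction is correct and is precisely the classical Sch\"onfinkel--Curry argument; the paper itself gives no proof of this lemma, deferring instead to Feferman and Beeson, where exactly this case analysis ($\mb s\mb k\mb k$ for the bound variable, $\mb k\,t$ for other atoms, $\mb s(\lambda x.t_1)(\lambda x.t_2)$ for applications) appears. Your attention to strictness of $\simeq$ in the application case and to totality of $\mb k a$, $\mb s a b$ for the definedness claim covers the only points that require care in the partial setting.
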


An immediate consequence of the foregoing abstraction lemma   is the recursion theorem for pca's (see \cite[p.\ 96]{F75}, \cite[p.\ 63]{F79}, \cite[p.\ 103]{B85}).  
\begin{lemma}[Recursion theorem] There exists a closed application term $\mb f$ such that for  every pca $A$ and $a,b\in A$ we have $A\models {\mb f}\downarrow$ 
	and
\begin{itemize}
	\item $A\models \mb f a\downarrow$; 
	\item $A\models \mb f ab\simeq a(\mb fa) b$. 
\end{itemize}
\end{lemma}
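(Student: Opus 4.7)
The plan is to construct $\mb f$ as a fixed-point-style self-applying combinator, in the spirit of the classical $\mathrm Y$ combinator of untyped $\lambda$-calculus, but adapted carefully to the partial setting. The defining relation $\mb f a b \simeq a(\mb f a)b$ says that $\mb f a$ ``acts like'' $a(\mb f a)$ on every argument $b$, and the standard way to realise such a fixed point in combinatory logic is via self-application. A naive translation of $\mathrm Y$ however fails because the key self-application may fail to denote; the remedy is an $\eta$-expansion that hides the self-application inside a $\lambda$-abstraction, whose definedness is guaranteed by the preceding lemma.

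Concretely, I set $\mb r := \lambda x.\lambda y.\lambda z.\, y(xxy)z$ and define $\mb f := \mb r\mb r$. Three applications of the abstraction lemma then deliver everything. First, $\mb r$ itself denotes a value, and for any $u\in A$ we have $\mb r u\downarrow$ with $\mb r u \simeq \lambda y.\lambda z.\, y(uuy)z$, again a value. Specialising to $u=\mb r$ gives $\mb f\downarrow$ and $\mb f \simeq \lambda y.\lambda z.\, y(\mb f y)z$; this dispatches the clause $A\models \mb f\downarrow$. Next, applying to $a\in A$, the abstraction lemma yields $\mb f a\downarrow$ with $\mb f a\simeq \lambda z.\, a(\mb f a)z$, dispatching the clause $A\models \mb f a\downarrow$. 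Finally, applying to $b\in A$ produces $\mb f a b \simeq a(\mb f a)b$, which is exactly the required equation.

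The main obstacle is not the algebraic identity, which falls out of a few rewrites, but maintaining definedness at each stage in the absence of totality. This is precisely why the outer $\lambda z$ in the definition of $\mb r$ is essential: a more naive template such as $\lambda x.\lambda y.\, y(xxy)$ would force the self-application $xxy$ to converge before $y(xxy)$ is even formed, and $\mb r'\mb r'$ could then fail to denote. By contrast, in the $\eta$-expanded form the potentially divergent self-application is sealed inside an abstraction, so that $\mb r$, $\mb r\mb r$ and $\mb r\mb r a$ are successively values, and only the outermost application to $b$ actually unwinds the fixed-point equation. Once the right shape of $\mb r$ is chosen, the verification is essentially bookkeeping with the abstraction lemma.
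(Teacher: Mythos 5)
Your proof is correct and follows essentially the same route as the paper: both realise the fixed point by a self-application guarded behind an extra $\lambda$-abstraction (the $\eta$-expansion) so that the abstraction lemma guarantees definedness at each stage. The only cosmetic difference is that you use the Turing-style combinator $\mb f:=\mb r\mb r$ with $\mb r:=\lambda xyz.\,y(xxy)z$, whereas the paper takes the Curry-style $\mb f:=\lambda a.cc$ with $c:=\lambda db.\,a(dd)b$; the verification is identical in substance.
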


\begin{proof}
The heuristic approach consists in finding a fixed point of the form $cc$. Let us search for
$\mb f$ satisfying $\mb fa\simeq cc$, and hence find  a solution of the equation
	\[       ccb\simeq a(cc)b. \]
By using $\lambda$-abstraction, we can easily arrange to have, for every $d$,
	\[       cdb\simeq a(dd)b. \]
Indeed, let $\mb f:=\lambda a.cc$, where $c:=\lambda db.a(dd)b$. Then $f$ is as desired. 
\end{proof}

In every pca, one has pairing and unpairing\footnote{Let $\mb p=\lambda xyz.zxy$, $\mb{p_0}:=\lambda x.x\mb k$, and $\mb{p_1}:=\lambda x.x\bar{\mb k}$, where $\bar{\mb k}:=\lambda xy.y$. Projections $\mb{p_0}$ and $\mb{p_1}$ need not be total. For realizability purposes, however, it is not necessary to have total projections.} combinators $\mb p$,  $\mb {p_0}$, and  $\mb {p_1}$ such that: 
\begin{itemize}
	\item  $\mb pab\downarrow$;
	\item   $\mb {p_i}(\mb pa_0a_1)\simeq a_i$.
\end{itemize}

Generic realizability is based on partial combinatory algebras with some additional structure (see  however Remark \ref{remark}).

\begin{definition}
We say that $A$ is a pca over $\omega$ if there are extra combinators $\mb{succ}, \mb{pred}$ (successor and predecessor combinators), $\mb d$ (definition by cases combinator), and a  map
$n\mapsto \bar n$ from $\omega$ to $A$ such that for all $n\in \omega$ 
	\begin{align*}
	\succe \bar n&\simeq \overline{n+1}, & \pred\overline{n+1}&\simeq \bar n,
	\end{align*}
	\[ \mb d\bar n\bar mab\simeq
	\begin{cases} a & n=m;\\ b & n\neq m. \end{cases}\]
One then defines $\mb 0:=\bar 0$ and $\mb 1:=\bar 1$. 
	
The notion of a pca over $\omega$  coincides with  the notion of $\omega$-pca$^+$  in, e.g., \cite{R05a}.
\end{definition}

Note that one can do without $\mb k$ by letting $\mb k:=\mb d\mb 0\mb 0$. The existence of $\mb d$ implies that the map $n\mapsto \bar n$ is one-to-one. In fact, suppose $\bar n=\bar m$ but $n\neq m$. Then $\mb d\bar n\bar n\simeq \mb d\bar n\bar m$. It then follows that $a\simeq \mb d\bar n\bar nab\simeq \mb d\bar n\bar mab\simeq b$ for all $a,b$. On the other hand, by our definition, every pca contains at least two elements.

\begin{remark}\label{remark}
The notion of a pca over $\omega$ is slightly impoverished one compared to that of a model of  Beeson's theory $\mathbf{PCA}^+$ 
\cite[VI.2]{B85} or Feferman's applicative structures \cite{F79}. However, for our purposes all  the differences between these structures are immaterial as every pca can be expanded to a model of $\mathbf{PCA}^+$, which at the same time is also an applicative structure (see \cite[VI.2.9]{B85}). 

By using, say, Curry numerals, one obtains a combinator $\mb d$ for this representation of natural numbers. So, every pca can be turned 
into a pca over $\omega$ by using Curry numerals. On the other hand, the notion of pca over $\omega$ allows for other possible representations of natural numbers. Note that the existence of a combinator $\mb d$ for a given representation of natural 
numbers (together with a predecessor combinator), entails the existence of a primitive recursion operator $\mb r$ for such representation, that is, an element $\mb r$ such that:
\begin{align*}
\mb rab\bar 0&\simeq a;\\
\mb rab\overline{n+1}&\simeq b(\mb rab\bar n)\bar n.
\end{align*}
\end{remark}

\section{The theory $\czf$}
The logic of $\czf$ (Constructive Zermelo-Fraenkel set theory) is intuitionistic first order logic with equality. The only nonlogical symbol is $\in$ as in classical Zermelo-Fraenkel set theory $\sf ZF$. 
\begin{center} Axioms \end{center}

1. \textbf{Extensionality}:  $\forall x\, \forall y\, (\forall z\, (z\in x\biimp z\in y)\imp x=y)$,

2. \textbf{Pairing}: $\forall x\, \forall y\, \exists z\, (x\in z\land y\in z)$,

3. \textbf{Union}:  $\forall x\, \exists y\, \forall u\, \forall z\, (u\in z\land z\in x\imp u\in y)$,

4. \textbf{Infinity}:  $\exists x\, \forall y\, (y\in x\biimp y=0\lor \exists z\in x\, (y=z\cup\{z\}))$, 

5. \textbf{Set induction}:  $\forall x\, (\forall y\in x\, \vp(y)\imp \vp(x))\imp \forall x\, \vp(x)$,   for all formulae $\vp$,

6. \textbf{Bounded separation}:   $\forall x\, \exists y\, \forall z\, (z\in y\biimp z\in x\land \vp(z))$, for  $\vp$ bounded, where a formula is bounded if all quantifiers appear in the form $\forall x\in y$ and $\exists x\in y$,

7. \textbf{Strong collection}:    $\forall u\in x\, \exists v\, \vp(u,v)\imp \exists y\, (\forall u\in x\, \exists v\in y\, \vp(u,v)\land \forall v\in y\, \exists u\in x\, \vp(u,v))$,  for all formulae  $\vp$,

8. \textbf{Subset collection}: $\forall x\, \forall y\, \exists z\, \forall p\, (\forall u\in x\, \exists v\in y\, \vp(u,v,p)\imp \exists  q\in z\,  (\forall u\in x\, \exists v\in q\, \vp(u,v,p)\land \forall v\in q\, \exists u\in x\, \vp(u,v,p)))$,  for all  formulae $\vp$.

\begin{notation} Let $x=0$ be $\forall y\in x\, \neg (y=y)$ and $x=y\cup\{y\}$ be $\forall z\in x\, (z\in y\lor z=y)\land \forall z\in y\, (z\in x)\land y\in x$.
\end{notation}

\section{Finite types and axiom of choice}

Finite types $\sigma$ and their associated extensions $F_\sigma$ are defined by the following clauses:
\begin{itemize}
	\item $o\in\ft$ and $F_o=\omega$;
	\item if $\sigma,\tau\in\ft$, then $(\sigma)\tau\in\ft$ and \[F_{(\sigma)\tau}=F_\sigma\to F_\tau=\{ \text{total functions from $F_\sigma$ to $F_\tau$}\}.\]
\end{itemize} For brevity we write $\sigma\tau$ for $(\sigma)\tau$, if the type $\sigma$ is written as a single symbol. We say that $x\in F_\sigma$ has type $\sigma$. 

The set $\ft$ of all finite types, the set  $\{ F_\sigma\colon \sigma\in\ft\}$, and the set $\F=\bigcup_{\sigma\in\ft}F_\sigma$ all exist in $\czf$.

\begin{definition}[Axiom of choice in all finite types]
The schema $\ac_{\ft}$ consists of formulae
\[   \tag{$\ac_{\sigma,\tau}$}  \forall x^\sigma\, \exists y^\tau\, \vp(x,y)\imp \exists f^{\sigma\tau}\, \forall x^\sigma\,  \vp(x,f(x)), \]
where $\sigma$ and $\tau$ are (standard) finite types. 
\end{definition}

\begin{notation} We write $\forall  x^\sigma\, \vp(x)$ and $\exists x^\sigma\, \vp(x)$  as a shorthand for   $\forall x\, (x\in F_\sigma\imp  \vp(x))$ and $\exists x\, (x\in F_\sigma\land \vp(x))$ respectively. 
\end{notation}

\section{Defining extensional realizability in $\czf$}

In $\czf$, given a pca $A$ over $\omega$, we inductively define a class $\va$  such that \[\forall x\, (x\in\va\biimp x\subseteq A\times A\times \va).\]
The intuition for $\pair{a,b,y}\in x$ is that $a$ and $b$  are \emph{equal realizers} of the fact that  $y^A\in x^A$, where $x^A=\{y^A\colon \pair{a,b,y}\in x\text{ for some } a,b\in A\}$. 

General information on how to handle inductive definitions in $\czf$ can be found in \cite{A86,AR01,czf2}.  The inductive definition of $\va$ within $\czf$ is on par with that of $\mathrm{V}(A)$, the specifics of which appear in \cite[3.4]{R06}.

\begin{notation} We use $(a)_i$ or simply $a_i$ for $\mb {p_i}a$.   Whenever we write an application term $t$, we assume that it is defined. In other words, a formula $\vp(t)$ stands for $\exists a\, (t\simeq a\land \vp(a))$.
\end{notation}

\begin{definition}[Extensional realizability] We define the relation $a=b\fo \vp$, where $a,b\in A$ and $\vp$ is a realizability formula with parameters in $\va$. The atomic cases fall under the scope of definitions by transfinite recursion. 
\begin{align*}   
a=b&\fo x\in y && \Leftrightarrow && \exists z\, (\langle (a)_0,(b)_0,z\rangle \in y\land (a)_1=(b)_1\fo x=z)\\
a=b& \fo x=y && \Leftrightarrow  &&\forall \langle c,d,z\rangle \in x\, ((ac)_0=(bd)_0\fo z\in y) \text{ and }  \\
&&&&&  \forall \langle c,d,z\rangle \in y\, ((ac)_1=(bd)_1\fo z\in x)\\
a=b& \fo \vp\land \psi && \Leftrightarrow && (a)_0=(b)_0\fo \vp \land (a)_1=(b)_1\fo \psi \\
a=b& \fo \vp\lor\psi &&  \Leftrightarrow && (a)_0\simeq(b)_0\simeq \mb 0\land (a)_1=(b)_1\fo \vp \text{ or } \\ 
&&&&&  (a)_0\simeq (b)_0\simeq \mb 1\land (a)_1=(b)_1\fo \psi \\
a=b&\fo \neg\vp && \Leftrightarrow && \forall c, d\, \neg (c=d\fo \vp) \\
a=b&\fo \vp\imp\psi && \Leftrightarrow && \forall c,d\,  (c=d\fo \vp\imp  ac=bd\fo \psi) \\
a=b& \fo \forall x\in y\, \vp && \Leftrightarrow && \forall \langle c,d,x\rangle\in y\, (ac=bd\fo \vp) \\
a=b&\fo \exists x\in y\, \vp && \Leftrightarrow && \exists x\, (\langle (a)_0,(b)_0,x\rangle \in y\land (a)_1=(b)_1\fo \vp)\\
a=b& \fo \forall x\, \vp && \Leftrightarrow && \forall x\in \va\, (a=b\fo \vp) \\
a=b& \fo \exists x\, \vp && \Leftrightarrow && \exists x\in \va\, (a=b\fo \vp)
\end{align*}
\end{definition}
\begin{notation} We write $a\fo \vp$ for $a=a\fo \vp$. 
\end{notation}

The above definition builds on the variant \cite{R06} of   \McCarty realizability \cite{M84}, where bounded quantifiers are treated as quantifiers in their own right. Note that in the language of $\czf$, bounded quantifiers can be seen as  syntactic sugar by letting  $\forall x\in y\, \vp:=\forall x\, (x\in y\imp \vp)$ and $\exists x\in y\, \vp:=\exists x\, (x\in y\land \vp)$.  Nothing gets lost in translation, thanks to the following.
\begin{lemma}
	There are closed application terms $\mb u$ and $\mb v$ such that $\czf$ proves
	\[ \mb u\fo \forall x\in y\, \vp \biimp \forall x\, (x\in y\imp \vp), \]
	\[ \mb v\fo \exists x\in y\, \vp \biimp \exists x\, (x\in y\land \vp). \]
\end{lemma}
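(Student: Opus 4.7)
The plan is to construct $\mb u$ and $\mb v$ by splitting each biconditional into its two implications and realizing them separately. Setting $\mb u:=\ap\,\mb u_{\to}\,\mb u_{\leftarrow}$ and $\mb v:=\ap\,\mb v_{\to}\,\mb v_{\leftarrow}$, the task reduces to four closed $\lambda$-terms that mediate between the ``primitive'' realizability clauses for the bounded quantifiers and the clauses obtained by expanding $\forall x\in y\,\vp$ and $\exists x\in y\,\vp$ as unbounded quantifiers guarded by $x\in y$.

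Consider first the easier ``unbounded $\Rightarrow$ bounded'' direction of the $\forall$ clause. Given $a=b\fo \forall x\,(x\in y\imp\vp)$, for any $\pair{c,d,x}\in y$ I would set $\mb u_{\leftarrow}:=\lambda e\lambda c.\,e(\ap c\,\mb i)$, where $\mb i$ is a uniform closed realizer of $x=x$ valid for every $x\in\va$. Because $(\ap c\,\mb i)_0\simeq c$ and $(\ap c\,\mb i)_1\simeq \mb i$, the pair $\ap c\,\mb i=\ap d\,\mb i$ realizes $x\in y$ via the witness $\pair{c,d,x}\in y$ together with $\mb i\fo x=x$. Dually, for the $\exists$ ``bounded $\Rightarrow$ unbounded'' direction, a term pairing the given witness realizers with $\mb i$ converts a bounded existential realizer into an unbounded one: $\mb v_{\to}:=\lambda e.\,\ap\bigl(\ap (e)_0\,\mb i\bigr)(e)_1$.

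The remaining two directions need a substitutivity property for extensional realizability: a closed term $\mb{sub}$ such that whenever $c=d\fo x=z$ and $e=f\fo\vp(z)$ one has $\mb{sub}\,c\,e=\mb{sub}\,d\,f\fo\vp(x)$ (with a symmetric companion for the $\exists$ case). Granted this, I would take $\mb u_{\to}:=\lambda e\lambda c.\,\mb{sub}\,(c)_1\bigl(e\,(c)_0\bigr)$: unfolding $c=d\fo x\in y$ yields the witness $z$ with $\pair{(c)_0,(d)_0,z}\in y$ and $(c)_1=(d)_1\fo x=z$, so $e\,(c)_0=f\,(d)_0\fo\vp(z)$, and $\mb{sub}$ transports this along $(c)_1$ to a realizer of $\vp(x)$. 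The corresponding $\mb v_{\leftarrow}$ is dual: first extract the witness from $(e)_0$, then $\mb{sub}$ (together with symmetry) transports the realizer of $\vp(x)$ produced by $(e)_1$ into one for $\vp(z)$.

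The main obstacle is the auxiliary toolkit: reflexivity $\mb i\fo x=x$, symmetry, and the substitutivity lemma. Reflexivity is obtained via the recursion theorem applied to a term that, for every $\pair{c,d,z}\in x$, returns a pair witnessing $z\in x$ together with a recursive call producing a realizer of $z=z$; the correctness is proven by set induction over $\va$. Substitutivity is the genuine technical heart and is established by induction on $\vp$, its atomic clauses handled by a simultaneous set induction on the parameters using the symmetric, two-realizer shape of the equality clause. Once these are in place, the lemma is essentially a corollary: the four $\lambda$-terms above, with all their required computations $\ap ab\downarrow$ and $(\ap ab)_i\simeq a,b$ visible from the abstraction lemma, discharge every line of the biconditionals by routine unfolding.
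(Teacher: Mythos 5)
Your construction is correct and is exactly the argument the paper has in mind (the paper states this lemma without proof, treating it as routine once the equality machinery of Lemma 6.2 is available): the two ``easy'' directions use pairing with the reflexivity realizer $\mb{i_r}$, and the two others use the substitution property for arbitrary formulas, which follows from the realizability of the equality axioms together with soundness for intuitionistic logic with equality. Your terms $\mb u_{\to}$, $\mb u_{\leftarrow}$, $\mb v_{\to}$, $\mb v_{\leftarrow}$ all check out against the clauses of the realizability definition, and your account of where the auxiliary toolkit comes from (recursion theorem plus set induction for reflexivity, double recursion plus induction on $\vp$ for substitutivity) matches how the paper actually builds $\mb{i_r}$, $\mb{i_s}$, $\mb{i_t}$, $\mb{i_0}$, $\mb{i_1}$.
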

The advantage of having special clauses for bounded quantifiers is that it simplifies a great deal the construction of realizers. 

\begin{remark}
In the context of (finite type) arithmetic,  extensional notions of realizability typically give rise to a partial equivalence relation. Namely,  for every formula $\vp$,  the relation
$\{(a,b)\in A^2\colon a=b\fo \vp\}$ is  symmetric and transitive. This is usually  seen by induction on $\vp$, the atomic case being trivial. 
The situation, though,  is somewhat different in set theory. Say that $a=b\fo x\in y$ and $b=c\fo x\in y$. All we know is that for some $u,v\in\va$ we have that $\pair{(a)_0,(b)_0,u}, \pair{(b)_0,(c)_0,v}\in y$, $(a)_1=(b)_1\fo x=u$, and $(b)_1=(c)_1\fo x=v$. Since $u$ and $v$ need not be the same set, even if elements of $\va$ behave as expected, that is, $\{(a,b)\colon \pair{a,b,y}\in x\}$ is symmetric and transitive for any given $x,y\in\va$,\footnote{One could inductively define $\va$ so as to make $\{(a,b)\in A^2\colon \pair{a,b,y}\in x\}$
symmetric and transitive. Just let $x\in \va$ if and only if 
	\begin{itemize}
		\item $x$ consists of triples $\pair{a,b,y}$ with $y\in \va$;
		\item whenever $\pair{a,b,y}\in x$, $\pair{b,a,y}\in x$;
		\item whenever $\pair{a,b,y}\in x$ and $\pair{b,c,y}\in x$, also $\pair{a,c,y}\in x$.
\end{itemize}} we cannot conclude that $a=c\fo x\in y$. So, transitivity can fail. 

As it turns out, for our purposes, this is not an issue at all.  Note however that the canonical names for objects of finite type do indeed behave as desired and so does the relation $a=b\fo \vp$ for formulas of finite type arithmetic. This is in fact key in validating the axiom of choice in all finite types (Section \ref{finite type}). Except for this deviation, the clauses for connectives and quantifiers follow the general blueprint of extensional realizability.  We  just feel justified in keeping the notation $a=b\fo \vp$.
\end{remark}

\section{Soundness for intuitionistic first order logic with equality}

From now on, let $A$ be a pca over $\omega$ within $\czf$.  Realizability of the equality axioms relies on the following fact about pca's.

\begin{lemma}[Double recursion theorem]
There are  combinators $\mb g$  and $\mb h$ such that, for all $a,b,c\in A$:
\begin{itemize}
	\item $\mb gab\downarrow$ and $\mb hab\downarrow$;
	\item $\mb gabc\simeq a(\mb hab)c$;
	\item $\mb habc\simeq b(\mb gab)c$.
\end{itemize} 
\end{lemma}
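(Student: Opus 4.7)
The plan is to generalise the self-application trick used in the proof of the single recursion theorem above, where a fixed point was produced in the form $cc$. For the double recursion theorem we search for a \emph{pair} of fixed points of the shape
\[ \mb{g}ab \simeq d_1 d_1 d_2, \qquad \mb{h}ab \simeq d_2 d_1 d_2, \]
with $d_1, d_2$ application terms depending on $a, b$, engineered so that each of the two expressions $d_1 d_1 d_2$ and $d_2 d_1 d_2$, when fed one further argument, reduces to $a$ (respectively $b$) applied to the \emph{other}.

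Concretely, using $\lambda$-abstraction I would introduce the closed application terms
\begin{align*}
c_1 &:= \lambda abxyz.\, a(yxy)z, \\
c_2 &:= \lambda abxyz.\, b(xxy)z,
\end{align*}
and set
\[ \mb{g} := \lambda ab.\, (c_1 ab)(c_1 ab)(c_2 ab), \qquad \mb{h} := \lambda ab.\, (c_2 ab)(c_1 ab)(c_2 ab). \]
By the $\lambda$-abstraction lemma, $c_1 ab$ and $c_2 ab$ each admit up to two further arguments while remaining defined (only the final application, which introduces $a$ or $b$, can fail to converge), so $\mb{g}ab\downarrow$ and $\mb{h}ab\downarrow$ for all $a,b\in A$.

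Writing $d_i := c_i ab$, the $\lambda$-abstraction clauses give $d_1 xyz \simeq a(yxy)z$ and $d_2 xyz \simeq b(xxy)z$. Instantiating $x := d_1$, $y := d_2$, $z := c$ then yields
\[ \mb{g}abc \simeq d_1 d_1 d_2 c \simeq a(d_2 d_1 d_2) c \simeq a(\mb{h}ab)c, \]
and, symmetrically,
\[ \mb{h}abc \simeq d_2 d_1 d_2 c \simeq b(d_1 d_1 d_2) c \simeq b(\mb{g}ab)c, \]
which are the desired equations.

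The only genuine obstacle here is recognising the right ``twisted'' self-application pattern: namely that $d_1$ should mention $yxy$ while $d_2$ mentions $xxy$, so that feeding the pair $(d_1,d_2)$ into either term produces the other as a subterm. Once this pattern is in place, the proof is a mechanical application of $\lambda$-abstraction, entirely parallel to the proof of the single recursion theorem presented earlier; and the same template clearly scales to produce mutually recursive $n$-tuples of combinators, although only the case $n=2$ is needed to realise the equality axioms in the next section.
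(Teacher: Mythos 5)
Your proof is correct, but it takes a different route from the paper's. The paper derives the double recursion theorem \emph{from} the single recursion theorem: it sets $t(a,b):=\lambda xc.\,a(\lambda c.\,bxc)c$, defines $\mb g:=\lambda ab.\,\mb f\,t(a,b)$ using the fixed point operator $\mb f$ already constructed, and reads off $\mb h:=\lambda abc.\,b(\mb f\,t(a,b))c$; the two fixed-point equations then fall out of the single equation $\mb f\,t\,c\simeq t(\mb f\,t)c$. You instead redo the self-application trick from scratch with a symmetric, ``twisted'' pair of terms $d_1=c_1ab$, $d_2=c_2ab$ whose bodies reference $yxy$ and $xxy$ respectively, so that $d_1d_1d_2$ and $d_2d_1d_2$ each reproduce the other under one more application. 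Both arguments are sound: your definedness claims follow from the abstraction lemma exactly as you say (a multi-variable abstraction applied to fewer than all of its arguments always converges, and the remaining equations are only Kleene equalities), and the computation $\mb gabc\simeq d_1d_1d_2c\simeq a(d_2d_1d_2)c\simeq a(\mb hab)c$ checks out. The paper's reduction is shorter given that $\mb f$ is already in hand and makes the dependence on the single recursion theorem explicit; your construction is self-contained, manifestly symmetric in the two components, and, as you note, scales directly to mutually recursive $n$-tuples, which the paper's asymmetric bootstrapping would handle slightly less transparently. Either proof serves the later application to the equality combinators $\mb{i_t}$ and $\mb{i_0}$.
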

\begin{proof}
	Let $t(a,b):=\lambda xc.a(\lambda c.bxc)c$. Set $\mb g:=\lambda ab.\mb ft(a,b)$, where $\mb f$ is the fixed point operator from the recursion theorem. Set $\mb h:=\lambda abc.b(\mb ft(a,b))c$. Verify that $\mb g$ and $\mb h$ are as desired.
\end{proof}

\begin{lemma}\label{equality}
There are closed application terms $\mb{i_r}$, $\mb{i_s}$, $\mb{i_t}$, $\mb{i_0}$ and $\mb{i_1}$ such that $\czf$ proves, for all $x,y,z\in\va$,
\begin{enumerate}[\quad $(1)$]
\item $\mb{i_r}\fo x=x$;
\item $\mb{i_s}\fo x=y\imp y=x$;
\item $\mb{i_t}\fo x=y\land y=z\imp x=z$;
\item $\mb{i_0}\fo x=y\land y\in z\imp x\in z$;
\item $\mb{i_1}\fo x=y\land z\in x\imp  z\in y$.
\end{enumerate}
\end{lemma}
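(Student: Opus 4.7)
The plan is to exhibit each realizer explicitly, using the recursion theorem to define $\mb{i_r}$ and $\mb{i_t}$ and giving $\mb{i_s}$, $\mb{i_0}$, $\mb{i_1}$ by closed $\lambda$-terms. Correctness is then verified by unfolding the realizability clauses for $=$ and $\in$, with an $\in$-induction wherever the target formula is an equation between sets.

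For reflexivity, $\mb{i_r}=\mb{i_r}\fo x=x$ reduces to the condition that for every $\langle c,d,z\rangle\in x$ one has $(\mb{i_r}c)_0=(\mb{i_r}d)_0\fo z\in x$, and similarly for the first components. The natural choice is to use the triple $\langle c,d,z\rangle$ itself as the witness for $z\in x$; this forces $((\mb{i_r}c)_0)_0\simeq c$, $((\mb{i_r}d)_0)_0\simeq d$ and $((\mb{i_r}c)_0)_1=((\mb{i_r}d)_0)_1\fo z=z$, the last being furnished by the inductive hypothesis. Accordingly we take $\mb{i_r}$ to be a fixed point of $f\mapsto\lambda c.\mb p(\mb p c f)(\mb p c f)$ and prove $\mb{i_r}\fo x=x$ by $\in$-induction on $x$. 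Symmetry does not require induction: the clause defining $a=b\fo x=y$ is itself symmetric up to swapping the two projections of $ac$ and $bd$, so $\mb{i_s}:=\lambda cp.\mb p((cp)_1)((cp)_0)$ suffices.

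Transitivity is the main obstacle. From $c=d\fo x=y\land y=z$ we extract $(c)_0=(d)_0\fo x=y$ and $(c)_1=(d)_1\fo y=z$. Given $\langle p,q,u\rangle\in x$, applying $(c)_0,(d)_0$ produces a triple $\langle r,r',v\rangle\in y$ together with a realizer $\tau\fo u=v$, where $r,r',\tau$ are straightforward projections of $(c)_0 p$ and $(d)_0 q$. Feeding $r,r'$ into $(c)_1,(d)_1$ then yields $\langle s,s',w\rangle\in z$ and a realizer $\upsilon\fo v=w$. To splice $\tau$ and $\upsilon$ into a realizer of $u=w$ one must \emph{recursively} invoke $\mb{i_t}$ at a lower rank; hence we define $\mb{i_t}$ by the recursion theorem so that $(\mb{i_t}cp)_0$ equals $\mb p\,s\,(\mb{i_t}(\mb p\,\tau\,\upsilon))$ in the $x$-direction, and by a symmetric construction (using $(c)_1$ first and then $(c)_0$) in the $z$-direction. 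Correctness is proved by $\in$-induction: the intermediate sets $v\in y$ and $w\in z$ reached during the recursion have strictly smaller $\in$-rank than $u$, so the IH applies.

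Items (4) and (5) are then immediate. For (4), $c=d\fo x=y\land y\in z$ gives a triple $\langle((c)_1)_0,((d)_1)_0,w\rangle\in z$ together with $((c)_1)_1\fo y=w$; combining $(c)_0\fo x=y$ with $((c)_1)_1\fo y=w$ via $\mb{i_t}$ produces a realizer of $x=w$, so setting $\mb{i_0}c:=\mb p\,((c)_1)_0\,(\mb{i_t}(\mb p\,(c)_0\,((c)_1)_1))$ works. The construction of $\mb{i_1}$ is analogous: from $c=d\fo x=y\land z\in x$ one applies $(c)_0$ to the membership witness $((c)_1)_0$ to obtain a triple in $y$, and then splices the resulting realizer of $u=w$ with $((c)_1)_1\fo z=u$ via $\mb{i_t}$ to realize $z\in y$.
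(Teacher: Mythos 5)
Your proposal is correct and follows essentially the same route as the paper: the same fixed-point equation for $\mb{i_r}$, the same swap for $\mb{i_s}$, transitivity via the recursion theorem verified by set induction on the triple $\pair{x,y,z}$, and (4), (5) obtained by splicing realizers through $\mb{i_t}$. The only organizational difference is that the paper defines $\mb{i_t}$ and $\mb{i_0}$ simultaneously via the double recursion theorem, whereas you define $\mb{i_t}$ by a single self-recursion (inlining the unpacking of the membership realizer) and then give $\mb{i_0}$ as an explicit non-recursive term built from $\mb{i_t}$; both work.
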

\begin{notation} Write, say, $a_{ij}$ for $\mb{p_j}(\mb{p_i} a)$.
\end{notation}
\begin{proof}
(1) By the recursion theorem in $A$, we can find $\mb{i_r}$ such that
\[  \mb{i_r}a\simeq \ap(\ap a\mb{i_r})(\ap a\mb{i_r}). \]
By  set induction, we show that $\mb{i_r}\fo x=x$ for every $x\in\va$. Let $\pair{a,b,y}\in x$. We want $(\mb{i_r}a)_0=(\mb{i_r}b)_0\fo y\in x$. Now $(\mb{i_r}a)_{00}\simeq a$ and similarly for $b$. On the other hand, $(\mb{i_r}a)_{01}\simeq(\mb{i_r}b)_{01}\simeq \mb{i_r}$. By induction, $\mb{i_r}\fo y=y$, and so we are done. Similarly for $(\mb{i_r}a)_1=(\mb{i_r}b)_1\fo y\in x$.

(2) We just need to interchange. Let 
\[    \mb{i_s}:=\lambda ac.\ap (ac)_1(ac)_0. \]
Suppose $a=b\fo x=y$. We want $\mb{i_s}a=\mb{i_s}b\fo y=x$. Let $\pair{c,d,z}\in y$. By definition, $(ac)_1=(bd)_1\fo z\in x$. Now $(ac)_1\simeq (\mb{i_s}ac)_0$, and similarly $(bd)_1\simeq (\mb{i_s}bd)_0$. Then we are done. Similarly for the other direction. 

(3,4) Combinators $\mb{i_t}$ and $\mb{i_0}$ are defined by a double recursion  in $A$.  By induction on triples $\pair{x,y,z}$, one then shows that $\mb{i_t}\fo x=y\land y=z\imp x=z$ and $\mb{i_0}\fo x=y\land y\in z\imp x\in z$. Eventually,  $\mb {i_t}$ and $\mb {i_r}$ are solutions of equations of the form
\begin{align*}
	\mb {i_t}a& \simeq \mb t \mb {i_0}a, \\
	\mb {i_0}a&\simeq \mb r\mb {i_t}a,
\end{align*}
where $\mb t$ and $\mb r$ are given closed application terms. These are given by the fixed point operators from the double recursion theorem. 

(5) Set
\[  \mb{i_1}:=\lambda a.\ap (a_0a_{10})_{00}(\mb{i_t}(\ap a_{11}(a_0a_{10})_{01})). \]
\end{proof}

\begin{theorem}\label{int sound}
For every formula  $\vp(x_1,\ldots,x_n)$ provable in intuitionistic first order logic with equality, there exists a  closed application term $\mb e$ such that $\czf$ proves $\mb e\fo \forall x_1\cdots \forall x_n\, \vp(x_1,\ldots,x_n)$.
\end{theorem}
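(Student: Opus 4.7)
The plan is to proceed by induction on the length of a derivation in a fixed Hilbert-style axiomatization of intuitionistic first-order logic with equality. For each axiom schema I construct a closed application term realizing it uniformly in all parameters from $\va$, and for each inference rule I show how to build a realizer for the conclusion from realizers for the premises. The universal closure is then automatic, because the clause for $\forall x$ treats realizers generically: a uniformly constructed realizer of $\vp(x)$ in $x\in\va$ is at the same time a realizer of $\forall x\,\vp(x)$.

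For the propositional axioms I use the standard Kleene-style recipes, adapted to the extensional clause. Thus $\vp\imp(\psi\imp\vp)$ is realized by $\mb e:=\lambda xy.x$: to show $\mb e=\mb e\fo \vp\imp(\psi\imp\vp)$, I must verify that whenever $c=d\fo \vp$, the term $\mb ec=\mb ed$ realizes $\psi\imp\vp$, which in turn unfolds to requiring that for every $c'=d'\fo\psi$, $\mb ecc'=\mb edd'\fo\vp$; and this reduces to the hypothesis $c=d\fo\vp$. The $\mb s$-axiom $(\vp\imp(\psi\imp\chi))\imp((\vp\imp\psi)\imp(\vp\imp\chi))$ is realized by $\lambda xyz.xz(yz)$ using the same unfolding, where the extensional hypothesis $c=d\fo\vp\imp\psi$ is precisely what delivers $xc=yd\fo\psi$ for every $c=d\fo\vp$. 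The axioms governing $\land$, $\lor$, $\exists$ are handled by the pairing combinators $\mb p,\mb{p_0},\mb{p_1}$ together with $\mb 0,\mb 1,\mb d$ for disjunctive case analysis; $\bot$-elimination (ex falso) is vacuous from the clause for $\neg$. Modus ponens is immediate from the implication clause by instantiating $c$ and $d$ to the realizer of the premise.

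For the quantifier axioms $\forall x\,\vp(x)\imp\vp(y)$ and $\vp(y)\imp\exists x\,\vp(x)$, the generic clauses for $\forall x$ and $\exists x$ reduce the verification to essentially the identity and pairing combinators, respectively, because the same pair $a=b$ is required to work for every $x\in\va$. The generalization rule $\vp\vdash\forall x\,\vp$ is then justified by the same observation provided the inductively constructed realizer has no dependence on $x$ as a parameter, which is guaranteed because all combinators in our construction are closed application terms. For the equality axioms --- reflexivity, symmetry, transitivity, and substitutivity with respect to $\in$ on each side --- the terms $\mb{i_r},\mb{i_s},\mb{i_t},\mb{i_0},\mb{i_1}$ supplied by Lemma~\ref{equality} realize the atomic instances, and the full congruence schema $x=y\imp(\vp(x)\imp\vp(y))$ is derived by induction on $\vp$ from these atomic cases using the propositional combinators just constructed.

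The main technical point I expect to belabour is the bookkeeping forced by the extensional clause $a=b\fo\vp$: each constructed combinator must be checked to satisfy $\mb e=\mb e\fo\vp$, not merely the intensional $\mb e\fo\vp$ of \McCarty realizability, so every verification tracks \emph{two} incoming realizers $c,d$ simultaneously through the $\lambda$-term. For the logical axioms this tracking is mechanical once the shape of the $\lambda$-abstraction is fixed, but it is exactly what makes the equality axioms non-trivial, and there the work has already been absorbed into Lemma~\ref{equality} via the double recursion theorem. Beyond that, the proof is routine induction on derivations.
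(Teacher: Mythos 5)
Your proposal is correct and follows essentially the same route as the paper, which simply defers to the analogous soundness arguments in McCarty's thesis and in Rathjen's generic realizability for $\czf$: induction on Hilbert-style derivations, combinatory realizers for the logical axioms, the generic quantifier clauses handling universal closure for free, and Lemma~\ref{equality} absorbing the only genuinely new work (the equality axioms under the two-realizer bookkeeping). The one cosmetic slip is that $\vp(y)\imp\exists x\,\vp(x)$ needs no pairing at all, since the unbounded existential clause is generic and carries no witness component; the identity combinator suffices there as well.
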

\begin{proof}
The proof is similar to \cite[5.3]{M84} and \cite[4.3]{R06}. 
\end{proof}

\section{Soundness for $\czf$}

We start with a  lemma concerning bounded separation. 

\begin{lemma}[$\czf$]\label{bounded}
	Let $\vp(u)$ be a bounded formula with parameters from $\va$ and $x\subseteq\va$. 
	Then
	\[  \{\pair{a,b,u}\colon a,b\in A\land u\in x\land a=b\fo \vp(u)\} \]
	is a set. 
\end{lemma}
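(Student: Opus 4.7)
The plan is to induct on the logical complexity of $\vp$, after first establishing that atomic realizability is set-valued. Define $R(u,v):=\{(a,b)\in A^2\colon a=b\fo u\in v\}$ and $S(u,v):=\{(a,b)\in A^2\colon a=b\fo u=v\}$, and claim that for every pair $u,v\in\va$ the classes $R(u,v)$ and $S(u,v)$ are sets, described by a bounded formula in $u$, $v$, and the values of $R$ and $S$ at strictly smaller pairs.

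The preliminary is proved by simultaneous set induction. The defining clause for $a=b\fo u\in v$ quantifies only over triples $\pair{c,d,z}\in v$ and invokes $S(u,z)$; symmetrically, the clause for $a=b\fo u=v$ reduces to $R$ at pairs $(z,v)$ with $z$ among the third components of triples in $u$, and at $(z,u)$ with $z$ from $v$. Order pairs by $(u',v')\prec(u,v)$ when $u'$ lies in the transitive closure of $\{u\}$, $v'$ in the transitive closure of $\{v\}$, with at least one membership strict; this is a well-founded relation on $\va\times\va$. Set induction along $\prec$, combined with bounded separation on $A^2$ at each stage, produces the required sets, in direct parallel with the atomic realizability argument for the non-extensional McCarty universe in \cite[\S 4]{R06}.

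It is convenient to strengthen the induction hypothesis on $\vp$ to allow any finite list of free variables $u_1,\ldots,u_n$, each ranging in a set $x_i\subseteq\va$, so that bounded quantifiers can be absorbed into the list. With this generalisation, atomic $\vp$ reduces by the preliminary to Strong Collection and Union: for instance, if $\vp(u)\equiv u\in t$ with parameter $t\in\va$, the target collection is $\bigcup_{u\in x}(R(u,t)\times\{u\})$, and the other atomic formats are handled analogously with $R$ or $S$. For propositional connectives, the realizability clauses combine the sets at subformulas by bounded operations on $A^2$, so bounded separation applies. For the bounded quantifier clauses $\forall z\in y\,\psi$ and $\exists z\in y\,\psi$, the quantifier ranges over triples $\pair{c,d,z}\in y$, where $y$ is a parameter or one of the $u_i$; the third components form a set, and applying the induction hypothesis to $\psi$ with the enlarged list of variables, followed by bounded separation on an appropriate product, gives the required realizability set.

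The main obstacle is the preliminary. Transfinite recursion for set-valued class functions is not immediately available in $\czf$; one has to verify, stage by stage, that each step of the recursion is obtained by bounded separation using only previously constructed sets as parameters, and that the resulting class function is uniform in $u,v$. Once this is arranged, the induction on $\vp$ is routine, using only bounded separation, Union and Strong Collection, and crucially exploiting that in a bounded $\vp$ every quantifier appearing in the realizability clauses remains bounded over a set (either $A$ itself or a parameter set in $\va$).
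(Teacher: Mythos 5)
Your overall route is the one the paper intends (its proof is just a pointer to \cite[Lemma 4.5, Lemma 4.6, Corollary 4.7]{R06}): first show that the atomic relations $R(u,v)$ and $S(u,v)$ are set-sized by a simultaneous transfinite recursion, then induct on the build-up of the bounded formula using Bounded Separation, Union and Strong Collection. The second stage of your argument is fine, and your closing worry is overcautious: $\czf$ does prove the class recursion theorem along well-founded relations (via Set Induction and Strong Collection), so no stage-by-stage verification is needed once the recursive calls are seen to descend.

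The genuine gap is in the preliminary: the relation $\prec$ you define does not cover all the recursive calls. The clause for $a=b\fo u=v$ calls, besides $R(z,v)$ for $z$ a third component of a triple in $u$ (which is fine, since $(z,v)\prec(u,v)$), also $R(z,u)$ for $z$ a third component of a triple in $v$. For $(z,u)\prec(u,v)$ your definition demands $z\in\mathrm{TC}(\{u\})$ and $u\in\mathrm{TC}(\{v\})$, and neither holds in general: such a $z$ lives in $\mathrm{TC}(v)$, not in $\mathrm{TC}(\{u\})$, and $u$ need not lie in $\mathrm{TC}(\{v\})$. So set induction along $\prec$ does not legitimate the definition of $S$, and this is precisely the step you flagged as the main obstacle. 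The standard repair is to use a measure symmetric in the two coordinates, e.g.\ recursion on the natural (Hessenberg) sum of the ranks of $u$ and $v$, under which all three calls strictly descend; alternatively, unfold the equality clause twice and observe that $S(u,v)$ then only calls $S$ at pairs both of whose coordinates have strictly descended (possibly after a swap), define $S$ by recursion on that well-founded relation, and recover $R$ from $S$ in one further step. With either fix the rest of your argument goes through.
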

\begin{proof}
	As in \cite[Lemma 4.5, Lemma 4.6, Corollary 4.7]{R06}.
\end{proof}

\begin{theorem}\label{czf sound}
For every theorem $\vp$ of $\czf$, there is a closed application term $\mb e$ such that  $\czf$ proves $\mb e\fo \vp$. 
\end{theorem}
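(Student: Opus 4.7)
The plan is to follow the blueprint of McCarty \cite{M84} and its adaptation to $\czf$ in \cite{R06}, proceeding by induction on the length of the $\czf$-proof of $\vp$. By Theorem \ref{int sound} all theorems of intuitionistic first-order logic with equality are already realized, so it suffices to exhibit, for each of the eight non-logical axioms of $\czf$, a closed application term that realizes its universal closure. The genuinely new feature compared with \cite{R06} is that every triple $\pair{a,b,y}$ in an element of $\va$ carries two realizers encoding the local equality datum; hence each canonical element of $\va$ that we build must be produced symmetrically in its two realizer coordinates, and every constructed realizer must satisfy the ``$a=b\fo$'' form of the clauses rather than merely its diagonal.

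First I would dispatch the combinatorial axioms. For Extensionality, the clauses for $\forall z\in x\,\vp$ and $x=y$ let a realizer of $\forall z\,(z\in x\biimp z\in y)$ be repackaged, with help from Lemma \ref{equality}, into a realizer of $x=y$. For Pairing, set $\vset{x,y}:=\{\pair{\mb 0,\mb 0,x},\pair{\mb 1,\mb 1,y}\}$ and pair the membership witnesses with $\ap$. For Union, form
\[
\{\pair{\ap ac,\ap bd,u}\colon \exists y\,(\pair{a,b,y}\in x\land \pair{c,d,u}\in y)\},
\]
which is a set by Replacement/Union in the background $\czf$, and realize the axiom by unpacking with $\mb{p_0},\mb{p_1}$. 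For Infinity, define canonical names $\bar n\in\va$ for every natural number and a canonical name for $\omega$, then realize the axiom using $\succe$, $\pred$ and the primitive recursion combinator $\mb r$ from Remark \ref{remark}. For Bounded separation, given $x\in\va$ and bounded $\vp(u)$, Lemma \ref{bounded} ensures that
\[
\{\pair{\ap ac,\ap bd,u}\colon \pair{a,b,u}\in x\land c=d\fo \vp(u)\}
\]
is a set, and a realizer is immediate. For Set induction, one applies the set induction scheme of the background $\czf$ to the formula ``for all $a,b\in A$, if $a=b\fo \forall y\in x\,\vp(y)$ then a designated term applied to $a,b$ realizes $\vp(x)$'', and threads the hypothesis through the recursion theorem to close the resulting fixed point.

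The delicate cases are Strong collection and Subset collection, which I would treat by lifting the corresponding background axiom. Given $a=b\fo \forall u\in x\,\exists v\,\vp(u,v)$ and any $\pair{c,d,u}\in x$, unpacking the clauses yields an application-term-definable class of pairs $(v,e)\in\va\times A$ with $e=e\fo \vp(u,v)$, where $e$ is explicitly built from $a,b,c,d$. Strong collection in the background, applied to this class, returns a set $Y\subseteq\va$ hitting every such $v$, out of which we assemble the target element of $\va$ by forming the appropriate triples in $A\times A\times Y$ and then build the single realizer by $\lambda$-abstraction and pairing. Subset collection is handled in the same style, with the outer parameter $p$ treated generically under the universal quantifier in the axiom, and with subset collection (equivalently fullness) in the background providing a uniform ``full'' set of partial realizer-to-witness assignments indexed by $p$.

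The main obstacle is the extensional bookkeeping: at every step, the two coordinates of each newly produced triple, and the two arguments to which a realizer is applied, must originate symmetrically from the original pair $(a,b)$, so that the term constructed really satisfies $a'=b'\fo \cdots$ and not merely $a'\fo \cdots$. This is most visible in Strong collection, where the witnesses $v,v'\in\va$ returned for the two realizer coordinates of the same $\pair{c,d,u}\in x$ need not be identical sets; the combinators $\mb{i_t}$ and $\mb{i_0}$ of Lemma \ref{equality} supply precisely the bridges needed to transfer realizers along the induced $\va$-equalities and paste the two sides together. Once this bookkeeping is in place, the verification of each axiom proceeds in parallel to \cite[\S4]{R06}.
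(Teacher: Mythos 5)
Your overall strategy---reduce to the eight non-logical axioms via Theorem \ref{int sound} and realize each one with canonical names whose two realizer coordinates are produced symmetrically---is the same as the paper's, and your treatments of Extensionality, Pairing, Union, Infinity, Bounded separation and Set induction are essentially the intended ones (the paper's Union name keeps the inner realizers $c,d$ unchanged rather than forming $\ap ac$, $\ap bd$, but either choice works with a matching realizer).

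The genuine gap is in Strong collection, and it propagates to your sketch of Subset collection. You unpack $a=b\fo \forall u\in x\,\exists v\,\vp(u,v)$ into, for each $\pair{c,d,u}\in x$, a class of pairs $(v,e)$ with $e=e\fo\vp(u,v)$, i.e.\ into \emph{diagonal} realizability data, and you then identify as the ``main obstacle'' that the witnesses $v,v'$ attached to the two realizer coordinates may differ, proposing to glue them with $\mb{i_t}$ and $\mb{i_0}$. This misreads the existential clause: $ac=bd\fo \exists v\,\vp(u,v)$ means $\exists v\in\va\,(ac=bd\fo\vp(u,v))$, a \emph{single} witness $v$ packaged together with the \emph{off-diagonal} datum $ac=bd\fo\vp(u,v)$, so there is no two-witness problem to repair. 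More importantly, the repair you propose cannot work: from $e\fo\vp(u,v)$, $e'\fo\vp(u,v')$ and $\fo v=v'$, the equality combinators of Lemma \ref{equality} only ever return further diagonal realizers of $\vp(u,v)$, never a pair $e''=e'''\fo\vp(u,v)$ whose two coordinates are tied to $a$ and $b$ respectively; the off-diagonal relation carries strictly more information than the two diagonals plus internal equality, which is the whole point of the extensional layer (compare the paper's remark that even transitivity of $a=b\fo x\in y$ can fail). The correct argument applies background strong collection \emph{once}, to the relation assigning to each $\pair{c,d,u}\in x$ some $v\in\va$ with $ac=bd\fo\vp(u,v)$, collects the triples $\pair{c,d,v}$ into a single $y\in\va$, and reads off the uniform realizer $\lambda a.\ap(\lambda c.\ap c(ac))(\lambda c.\ap c(ac))$. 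Your Subset collection paragraph inherits the same issue and is in any case too thin: one must also arrange (as the paper does via the auxiliary sets $y'$, $z'$, $z''$) that the members of the collected family are genuinely names in $\va$ before forming the final name $z$.
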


\begin{proof}
In view of Theorem \ref{int sound}, it is sufficient to show that every axiom of $\czf$ has a realizer. The proof is similar to that of \cite[Theorem 5.1]{R06}. The rationale is simple:  use the same  realizers, duplicate the names.  Remember that  $\mb e\fo\vp$ means $\mb e=\mb e\fo\vp$.

\textbf{Extensionality}. Let $x,y\in\va$.  Suppose $a=b\fo z\in x\biimp z\in y$ for all $z\in\va$. We look for $\mb e$ such that $\mb ea=\mb eb\fo x=y$. Set 
\[ \mb e:=\lambda ac.\ap(a_0(\ap c\mb{i_r}))(a_1(\ap c\mb{i_r})). \]
Suppose $\pair{c,d,z}\in x$. Then $\ap c\mb{i_r}=\ap d\mb{i_r}\fo z\in x$, since $\mb{i_r}\fo z=z$. Then $a_0(\ap c\mb{i_r})=b_0(\ap d\mb{i_r})\fo z\in y$. Therefore, $(\mb eac)_0=(\mb ebd)_0\fo z\in y$, as desired. The other direction is similar.\\

\textbf{Pairing}. Find $\mb e$ such that for all $x,y\in \va$,
\[   \mb e\fo x\in z\land y\in z, \]
for some $z\in \va$. Let $x,y\in\va$ be given. Define $z=\{\pair{\mb 0,\mb 0,x},\pair{\mb 0,\mb 0,y}\}$.
Let 
\[   \mb e=\ap (\ap \mb 0\mb{i_r})(\ap \mb 0\mb{i_r}). \]

\textbf{Union}. Find $\mb e$ such that for all $x\in \va$,
\[ \mb e\fo \forall u\in x\, \forall v\in u\, (v\in y), \]
for some $y\in \va$. Given $x\in\va$, let $y=\{\pair{c,d,v}\colon \exists \pair{a,b,u}\in x\, (\pair{c,d,v}\in u)\}$. Set $\mb e:=\lambda ac.\ap c\mb{i_r}$.\\

\textbf{Infinity}. Let $\dot\omega=\{\pair{\bar n,\bar n,\dot n}\colon n\in\omega\}$, where $\dot n=\{\pair{\bar m,\bar m, \dot m}\colon m<n\}$. Let us find $\mb e$ such that for all $y\in\va$,

\[ \mb e\fo y\in \dot \omega\biimp y=0\lor \exists z\in \dot\omega\, (y=z\cup\{z\}). \]
Recall that $y=0$ stands for $\forall x\in y\, \neg (x=x)$ and $y=z\cup\{z\}$ stands for $\forall x\in y\, (x\in z\lor x=z)\land (\forall x\in z\, (x\in y)\land z\in y)$. 

Let $\vartheta(y):=y=0\lor \exists z\in \dot\omega\, (y=z\cup\{z\})$. We want $\mb e$ such that for every $y\in\va$
\[ \tag{1} \mb e_0\fo y\in\dot\omega\imp \vartheta(y), \]
\[ \tag{2} \mb e_1\fo \vartheta(y)\imp y\in\dot\omega. \] 

Let us first consider (1). Suppose $a=b\fo y\in\dot \omega$. We want  $\mb e_0a=\mb e_0b\fo \vartheta(y)$.

By definition, there is $n\in\omega$ such that $a_0\simeq b_0\simeq \bar n$ and $a_1=b_1\fo y=\dot n$.

Case $n=0$. Then $\mb 0\fo y=0$, and so $\mb p\mb 0\mb 0\fo \vartheta(y)$. 

Case $n>0$.  We have $\pred a_0\simeq \pred b_0\simeq \bar m$ with $n=m+1$. We aim for a term $t(x)$ such that  $t(a)=t(b)\fo \exists z\in\dot\omega\, (y=z\cup\{z\})$ by requiring 
\[    t(a)_0\simeq t(b)_0\simeq \bar m, \]
\[   \tag{3} t(a)_1=t(b)_1\fo y=\dot m\cup\{\dot m\}. \]
If we succeed, then 
\[     \mb p\mb 1 t(a)=\mb p\mb 1t(b)\fo \vartheta(y). \]
Now, (3)  amounts to 
\begin{align}
	\tag{4} t(a)_{10}=t(b)_{10}&\fo \forall x\in y\, (x\in \dot m\lor x=\dot m)\\
	\tag{5} t(a)_{110}=t(b)_{110}&\fo \forall x\in \dot m\, (x\in y)\\
	\tag{6} t(a)_{111}=t(b)_{111}&\fo \dot m\in y
\end{align}
Part (4). Let $\pair{c,d,x}\in y$. Then $(a_1c)_0=(b_1d)_0\fo x\in \dot n$, that is,
\[   \pair{(a_1c)_{00},(b_1d)_{00},\dot k}\in \dot n, \]
\[   (a_1c)_{01}=(b_1d)_{01}\fo x=\dot k, \]
where $(a_1c)_{00}\simeq (b_1d)_{00}\simeq \bar k$. Here we have two more cases. If $k=m$, then 
\[ \mb p\mb 1  (a_1c)_{01}=\mb p\mb 1(b_1d)_{01}\fo x\in \dot m\lor x=\dot m. \]
If $k<m$, then $\pair{\bar k,\bar k,\dot k}\in\dot m$ and $\mb p\bar k(a_1c)_{01}=\mb p\bar k(b_1d)_{01}\fo x\in \dot m$, so that 
\[   \mb p\mb 0(\mb p\bar k(a_1c)_{01})=\mb p\mb 0(\mb p\bar k(b_1d)_{01}) \fo x\in\dot m\lor x=\dot m. \]
Then $t(a)$ such that 
\[   t(a)_{10}\simeq \lambda c.\mb d(a_1c)_{00}(\pred a_0)(\mb p\mb 1(a_1c)_{01})(\mb p\mb 0(a_1c)_0)\]
is as desired. 

Parts (5) and (6).  Let $t(a)$ satisfy 
\[   t(a)_{110}\simeq \lambda x.(a_1x)_1, \]
\[  t(a)_{111}\simeq (a_1(\pred a_0))_1. \]

We want $\mb e$ such that
\[   \mb e_0\simeq\lambda a.\mb d\mb 0a_0(\mb p\mb 0\mb 0)(\mb p\mb 1t(a)). \]
Then $\mb e_0$ does the job. 

As for (2),  suppose $a=b\fo \vartheta(y)$. We want $\mb e_1a=\mb e_1b\fo y\in \dot \omega$. By unravelling the definitions, we obtain two cases. 

(i) $a_0\simeq b_0\simeq \mb 0$ and $a_1=b_1\fo y=0$. It follows that $y=\dot 0$ and so $\mb{i_r}\fo y=\dot 0$. Therefore $\mb p a_0\mb{i_r}=\mb p b_0\mb{i_r}\fo y\in\dot\omega$, as $\pair{\mb 0,\mb 0,\dot 0}\in\dot\omega$.

(ii)  $a_0\simeq b_0\simeq \mb 1$ and $a_1=b_1\fo \exists z\in\dot \omega\, (y=z\cup\{z\})$  
Then there exists $m\in\omega$ such that $a_{10}\simeq b_{10}\simeq \bar m$ and 
\[  \tag{7} a_{11}=b_{11}\fo y=\dot m\cup\{\dot m\}. \]
We aim for a term $s(x)$ such that $s(a)=s(b)\fo y=\dot{n}$, where $n=m+1$. If we succeed, then
\[  \mb p(\succe a_{10})s(a)=\mb p(\succe b_{10})s(b) \fo y\in\dot\omega. \]
Note in fact that $\succe a_{10}\simeq \succe b_{10}\simeq \bar n$.  

For the left to right inclusion, suppose $\pair{c,d,x}\in y$. Our goal is $(s(a)c)_0=(s(b)d)_0\fo x\in\dot n$.  It follows from (7) that 
\[   a_{110}=b_{110}\fo \forall x\in y\, (x\in\dot m\lor x=\dot m), \]
and therefore
\[   \tag{8} a_{110}c=b_{110}d\fo x\in\dot m\lor x=\dot m. \]
>From (8) we get two more cases. First case: $(a_{110}c)_0\simeq (b_{110}d)_0\simeq \mb 0$ and $(a_{110}c)_1=(b_{110}d)_1\fo x\in\dot m$. Then one can verify that 
\[ (a_{110}c)_1=(b_{110}d)_1\fo x\in\dot n. \]
Second case: $(a_{110}c)_0\simeq (b_{110}d)_0\simeq \mb 1$ and $(a_{110}c)_1=(b_{110}d)_1\fo x=\dot m$. Then 
\[  \mb p\bar m(a_{110}c)_1=\mb p\bar m(b_{110}d)_1\fo x\in \dot n. \]
Let $s(x)$ be such that 
\[   (s(a)c)_0\simeq \mb d\mb 0(a_{110}c)_0(a_{110}c)_1(\mb p a_{10}(a_{110}c)_1). \]

For the right to left inclusion, suppose $k<n$. Our goal is $(s(a)\bar k)_1=(s(b)\bar k)_1\fo \dot k\in y$.  It follows from (7) that 
\begin{align*}
	\tag{9} a_{1110}=b_{1110}&\fo \forall x\in\dot m\, (x\in y), \\
	\tag{10} a_{1111}=b_{1111}&\fo \dot m\in y.
\end{align*}
If $k<m$, then $\pair{\bar k,\bar k, \dot k}\in \dot m$, and hence  $a_{1110}\bar k=b_{1110}\bar k\fo \dot k\in y$ by (9). On the other hand, if $k=m$ then (10) gives us the realizers. Therefore let $s(x)$ be such that 
\[   (s(a)\bar k)_1\simeq \mb d \bar ka_{10}a_{1111}(a_{1110}\bar k).   \]

We thus want $\mb e$ such that  
\[  \mb e_1\simeq \lambda a.\mb d\mb 0a_0(\mb pa_0\mb{i_r})(\mb p(\succe a_{10})s(a)).\]
Then $\mb e_1$ does the job. \\

\textbf{Set induction}. By the recursion theorem, let $\mb e$ be  such that $\mb ea\simeq a(\lambda c.\mb ea)$. 
Prove that 
\[ \mb e\fo \forall x\, (\forall y\in x\, \vp(y)\imp \vp(x))\imp \forall x\, \vp(x). \]
Let $a=b\fo \forall x\, (\forall y\in x\, \vp(y)\imp \vp(x))$. By definition, $a=b\fo \forall y\in x\, \vp(y)\imp \vp(x)$ for every $x\in\va$. By set induction, we show that   $\mb ea=\mb eb\fo \vp(x)$ for every $x\in\va$. Assume by induction  that  $\mb ea=\mb eb\fo \vp(y)$ for every $\pair{c,d,y}\in x$. This means that $\lambda c.\mb ea=\lambda d.\mb eb\fo \forall y\in x\, \vp(y)$. Then $a(\lambda c.\mb ea)=b(\lambda d.\mb eb)\fo \vp(x)$. The conclusion $\mb ea=\mb eb\fo \vp(x)$ follows.  \\

 \textbf{Bounded separation}. Find $\mb e$ such that for all $x\in\va$,
\[  \mb e\fo \forall u\in y\, (u\in x\land \vp(u))\land \forall u\in x\, (\vp(u)\imp u\in y), \]
for some $y\in\va$. Given $x\in\va$, let 
\[ y=\{ \pair{\ap ac,\ap bd,u}\colon \pair{a,b,u}\in x\land c=d\fo \vp(u)\}. \]
It follows from Lemma \ref{bounded} that  $y$ is a set. Moreover, $y$ belongs to $\va$. We want $\mb e$ such that 
\begin{align*}
\mb e_0&\fo \forall u\in y\, (u\in x\land \vp(u)),\\
\mb e_1&\fo \forall u\in x\, (\vp(u)\imp u\in y).
\end{align*}
By letting $\mb e=\mb p e_0e_1$, where  
\begin{align*}
e_0&:=\lambda f.\mb p(\mb p f_0\mb{i_r})f_1, \\
e_1&:= \lambda ac.\mb p(\mb pac)\mb{i_r},
\end{align*}
one verifies that $\mb e$ is as desired. \\

 \textbf{Strong Collection}. Set $\mb e:=\lambda a.\ap(\lambda c.\ap c(ac))(\lambda c.\ap c(ac))$.
Let $a=b\fo \forall u\in x\, \exists v\, \vp(u,v)$. By strong collection, we can find a set $y$ such that 
\begin{itemize}
	\item $\forall \pair{c,d,u}\in x\, \exists v\in\va\, (\pair{c,d,v}\in y\land  ac=bd\fo \vp(u,v))$, and 
	\item $\forall z\in y\, \exists \pair{c,d,u}\in x\, \exists v\in\va\, (z=\pair{c,d,v}\land ac=bd\fo \vp(u,v))$.
\end{itemize}
In particular, $y\in\va$. Show that 
\[  \mb ea=\mb eb\fo \forall u\in x\, \exists v\in y\, \vp(u,v)\land \forall v\in y\exists u\in x\, \vp(u,v). \]

\textbf{Subset collection}. We look for $\mb e$ such that for all $x,y\in\va$ there is a $z\in\va$ such that for all $p\in\va$
\[  \mb e\fo \forall u\in x\, \exists v\in y\, \vp(u,v,p)\imp \exists q\in z\, \psi(x,q,p), \]
where 
\[ \psi(x,q,p):= \forall u\in x\, \exists v\in q\, \vp(u,v,p)\land \forall v\in q\, \exists u\in x\, \vp(u,v,p). \]
Form the set $y'=\{\pair{f,g,v}\colon f,g\in A\land \exists i,j\in A\, \pair{i,j,v}\in y\}$. By subset collection, we can find a set $z'$ such that for all $a,b,p$, if 
\[  \tag{11} \forall \pair{c,d,u}\in x\, \exists \pair{\ap ac,\ap bd,v}\in y'\, (ac)_1=(bd)_1\fo \vp(u,v,p), \]
then there is a $q\in z'$ such that 
\[ \tag{12} \forall \pair{c,d,u}\in x\, \exists w\in q\, \vartheta\land \forall w\in q\, \exists \pair{c,d,u}\in x\, \vartheta,  \]
where $\vartheta=\vartheta(c,d,u,w;a,b,p)$ is 
\[\exists v\, (w=\pair{\ap ac,\ap bd,v}\land (ac)_1=(bd)_1\fo \vp(u,v,p)). \] 

Note that the $q\in z'$ asserted to exist is a subset of $y'$ and so $q\in\va$. On the other hand, there might be $q\in z'$ that are not in $\va$, and hence $z'$ need not be a subset of $\va$.  Let $z''=\{q\cap y'\colon q\in z'\}$. Now, $z''\subseteq\va$.  Finally, set
\[   z=\{\pair{\mb 0,\mb 0, q}\colon q\in z''\}. \]
Then $z\in\va$. It remains to find $\mb e$. Let $p\in\va$ and suppose 
\[ \tag{13} a=b\fo \forall u\in x\, \exists v\in y\, \vp(u,v,p). \] We would like to have
\[      \mb ea=\mb eb\fo \exists q\in z\, \psi(x,q,p). \]
By definition of $z$, we let  $(\mb ea)_0\simeq \mb 0$ and we look for a $q\in z''$ such that $(\mb ea)_1=(\mb eb)_1\fo \psi(x,q,p)$, that is,
\begin{align*}
(\mb ea)_{10}=(\mb eb)_{10}&\fo \forall u\in x\, \exists v\in q\, \vp(u,v,p), \\
(\mb ea)_{11}=(\mb eb)_{11}&\fo  \forall v\in q\, \exists u\in x\, \vp(u,v,p).
\end{align*}  
By (13) one can see that the parameters $a,b,p$ satisfy (11). Let $q\in z'$ be as in (12). We have already noticed that $q\in z''$. Let $\mb e$ be such that 
\begin{align*}
(\mb ea)_{10}&\simeq \lambda c.\mb p(\mb pac)(ac)_1, \\
(\mb ea)_{11}&\simeq \lambda f. \mb pf_1(f_0f_1)_1.
\end{align*}
One can verify that $\mb e$ is as desired. 
\end{proof}

\section{Realizing the axiom of choice in all finite types}\label{finite type}

We will make use of certain canonical names for pairs in $\va$.

\begin{definition}[Internal pairing]
For $x,y\in\va$, let
\[  \vset{x}=\{\pair{\mb 0,\mb 0,x}\}, \]
\[   \vset{x,y}=\{\pair{\mb 0,\mb 0,x},\pair{\mb 1,\mb 1,y}\}, \]
\[ \vpair{x,y}=\{\pair{\mb 0,\mb 0,\vset{x}}, \pair{\mb 1,\mb 1,\vset{x,y}}\}. \]

Note that all these sets are in $\va$.
\end{definition}

Below we shall use $\mathrm{UP}(x,y,z)$  and $\mathrm{OP}(x,y,z)$ as abbreviations for the set-theoretic formulae expressing, respectively,  that $z$ is the unordered pair of $x$ and $y$ (in standard  notation, $z=\{x,y\}$) and $z$  is the ordered pair of $x$ and $y$ (in standard  notation, $z=\pair{x,y}$). E.g.,  $\mathrm{UP}(x,y,z)$  stands for $x\in z\land y\in z\land \forall u\in z\, (u=x\lor u=y)$. Similarly, one can pick a suitable rendering of  $\mathrm{OP}(x,y,z)$  according to the definition of ordered pair $\pair{x,y}:=\{\{x\},\{x,y\}\}$.

\begin{lemma}\label{pairs}
There are closed application terms $\mb{u_0}$, $\mb{u_1}$, $\mb v$, $\mb w$, $\mb z$ such that for all $x,y\in \va$
\begin{align*}
\mb{u_0}&\fo \mathrm{UP}(x,x,\vset{x}), \\
\mb{u_1}&\fo \mathrm{UP}(x,y,\vset{x,y}),  \\
\mb v &\fo \mathrm{OP}(x,y, \vpair{x,y}), \\
\mb w& \fo \vpair{x,y}=\vpair{u,v}\imp x=u\land y=v,\\
\mb z&\fo \mathrm{OP}(x,y,z) \imp z=\vpair{x,y}.
\end{align*}
\end{lemma}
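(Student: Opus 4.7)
The plan is to construct each of the five realizers by direct computation on the canonical names, in the style of the Pairing and Infinity cases of Theorem \ref{czf sound}. The common building block is that whenever $\pair{\mb n,\mb n,w}$ sits inside one of $\vset{x}$, $\vset{x,y}$, or $\vpair{x,y}$, the term $\ap \mb n\mb{i_r}$ realizes the membership of $w$ in the enclosing set, using $\mb{i_r}\fo w=w$ from Lemma \ref{equality}(1). Unfolding $\mathrm{UP}(x,y,z)$ as $x\in z\land y\in z\land \forall u\in z\,(u=x\lor u=y)$, the two membership conjuncts are handled by this building block, while the universal conjunct is realized by a $\lambda$-term that branches on the incoming tag with $\mb d$ and returns the appropriate disjunct; this gives $\mb{u_0}$ (where no branching is needed, since only the tag $\mb 0$ appears) and $\mb{u_1}$. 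Fixing the rendering $\mathrm{OP}(x,y,z):=\exists a\,\exists b\,(\mathrm{UP}(x,x,a)\land\mathrm{UP}(x,y,b)\land\mathrm{UP}(a,b,z))$, I realize it at $x,y,\vpair{x,y}$ by witnessing the outer existentials with $\vset{x}$ and $\vset{x,y}$ and pairing $\mb{u_0}$, $\mb{u_1}$, and an outer-level analogue of $\mb{u_1}$ that realizes $\mathrm{UP}(\vset{x},\vset{x,y},\vpair{x,y})$; this yields $\mb v$.

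The main effort goes into $\mb w$. Suppose $a=b\fo \vpair{x,y}=\vpair{u,v}$. Then $a\mb 0,b\mb 0$ realize $\vset{x}\in\vpair{u,v}$ and $a\mb 1,b\mb 1$ realize $\vset{x,y}\in\vpair{u,v}$, with analogous data in the reverse direction. The outer projection of each realizer carries a tag $\mb 0$ or $\mb 1$ identifying which element of $\vpair{u,v}$ is being matched, while the inner projection realizes the corresponding atomic equality ($\vset{x}=\vset{u}$, $\vset{x}=\vset{u,v}$, and so on). Branching via $\mb d$ on these two tags gives four scenarios; in each of them I extract $x=u$ and $y=v$ by applying the $\vset{\cdot}=\vset{\cdot}$ realizers to $\mb 0$ and $\mb 1$, peeling off projections to obtain memberships of the form $x\in\vset{u,v}$, branching again on the inner tags and composing the resulting atomic equalities via $\mb{i_s}$ and $\mb{i_t}$ from Lemma \ref{equality}. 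The construction is long but entirely mechanical; the main obstacle is simply bookkeeping of nested projections, not any conceptual difficulty.

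For $\mb z$, suppose $a\fo \mathrm{OP}(x,y,z)$. Unpacking the two outer existentials, $a$ furnishes elements $a_0,a_1\in\va$ together with realizers of $\mathrm{UP}(x,x,a_0)$, $\mathrm{UP}(x,y,a_1)$, and $\mathrm{UP}(a_0,a_1,z)$. A short preliminary computation, combining these $\mathrm{UP}$-realizers with $\mb{u_0}$ and $\mb{u_1}$ in the manner of the Extensionality axiom in Theorem \ref{czf sound}, produces realizers of $a_0=\vset{x}$ and $a_1=\vset{x,y}$. I then realize $z=\vpair{x,y}$ in both directions of the atomic $=$-clause: given $\pair{c,d,w}\in z$, apply the universal conjunct of $\mathrm{UP}(a_0,a_1,z)$ to obtain a tag $\mb n\in\{\mb 0,\mb 1\}$ telling whether $w$ realizably equals $a_0$ or $a_1$, then compose via $\mb{i_t}$ with the appropriate equality to witness $w\in\vpair{x,y}$ at tag $\mb n$; conversely, the membership conjuncts of $\mathrm{UP}(a_0,a_1,z)$ give $a_0\in z$ and $a_1\in z$, which composed with the same equalities via $\mb{i_0}$ yield the required realizers that $\vset{x},\vset{x,y}\in z$, matching the tags $\mb 0$ and $\mb 1$ of $\vpair{x,y}$.
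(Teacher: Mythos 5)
Your construction is correct and follows exactly the route the paper intends: the paper's own proof is only the citation ``similar to \cite[3.2, 3.4]{M84}'', i.e.\ the direct computation of realizers for the Kuratowski pair from McCarty's thesis, adapted to the extensional setting by duplicating names and realizers, which is precisely what you carry out (using $\mb{i_r}$, $\mb{i_s}$, $\mb{i_t}$, $\mb{i_0}$ and tag-branching via $\mb d$, with the only real content in the case analyses for $\mb w$ and $\mb z$). You have in effect supplied the details the paper omits, and they check out.
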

\begin{proof}
This is similar to \cite[3.2, 3.4]{M84}. 	
\end{proof}

We now build a copy of  the hereditarily effective operations  relative to a pca $A$. 
\begin{definition}[$\mathsf{HEO}_A$] Let $A$ be a pca over $\omega$ with map $n\mapsto \bar n$  from $\omega$ to $A$. For any finite type $\sigma$, we define $a=_\sigma b$ with $a,b\in A$ by letting:
\begin{itemize}
	\item $a=_0 b$ iff there is $n\in\omega$ such that $a=b=\bar n$; 
	\item $a=_{\sigma\tau} b$ iff for every $c=_{\sigma}d$ we have $ac=_\tau bd$.
\end{itemize}
Let $A_\sigma=\{a\in A\colon a=_\sigma a\}$.
\end{definition}

\begin{lemma} 
For any type $\sigma$, and for all $a,b,c\in A$:
\begin{itemize}
	\item if $a=_\sigma b$ and $b=_\sigma c$, then $a=_\sigma a$, $b=_\sigma a$, and $a=_\sigma c$.
\end{itemize} 
It thus follows that
$A_\sigma=\bigcup_{b\in A}\{a\in A\colon a=_\sigma b\}=\bigcup_{a\in A}\{b\in A\colon a=_\sigma b\}$ and $=_\sigma$ is an equivalence relation on $A_\sigma$.
\end{lemma}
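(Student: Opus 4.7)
The plan is to proceed by induction on the type $\sigma$, establishing the three conclusions simultaneously. Since reflexivity-on-domain ($a=_\sigma a$ from $a=_\sigma b$) follows from symmetry and transitivity (flip to $b=_\sigma a$ via symmetry, then compose via transitivity), the real content is to show by induction on $\sigma$ that $=_\sigma$ is symmetric and transitive.

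The base case $\sigma=0$ is trivial: $a=_0 b$ and $b=_0 c$ force $a=b=\bar n$ and $b=c=\bar m$ for some $n,m\in\omega$, whence $n=m$ and $a=b=c$, giving all three conclusions at once.

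For the inductive step with $\sigma=(\sigma')\tau$, assume the full statement at both $\sigma'$ and $\tau$. To prove transitivity at $\sigma'\tau$, suppose $a=_{\sigma'\tau} b$ and $b=_{\sigma'\tau} c$, take an arbitrary $d=_{\sigma'} e$, and aim for $ad=_\tau ce$. Applying the definition of $a=_{\sigma'\tau} b$ to $d=_{\sigma'} e$ yields $ad=_\tau be$. To apply $b=_{\sigma'\tau} c$ and obtain $be=_\tau ce$, I need $e=_{\sigma'} e$; but from $d=_{\sigma'} e$ the IH (symmetry at $\sigma'$) gives $e=_{\sigma'} d$, and then the IH (transitivity at $\sigma'$) gives $e=_{\sigma'} e$. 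Concatenating $ad=_\tau be$ and $be=_\tau ce$ via the IH (transitivity at $\tau$) yields $ad=_\tau ce$. Symmetry at $\sigma'\tau$ is analogous: given $a=_{\sigma'\tau} b$ and $d=_{\sigma'} e$, use IH symmetry at $\sigma'$ to get $e=_{\sigma'} d$, apply the hypothesis to obtain $ae=_\tau bd$, then apply IH symmetry at $\tau$ to conclude $bd=_\tau ae$, as required for $b=_{\sigma'\tau} a$.

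The consequences at the end of the lemma then drop out: if $a=_\sigma b$ for some $b$, the first conclusion gives $a=_\sigma a$, so $a\in A_\sigma$, which establishes both set-theoretic equalities for $A_\sigma$; and the three conclusions together say exactly that $=_\sigma$ is reflexive on $A_\sigma$, symmetric, and transitive. The only subtlety in the whole argument is the bookkeeping in the inductive step — making sure that the auxiliary fact $e=_{\sigma'} e$ is derived strictly from the IH at the lower type $\sigma'$ rather than the claim at $\sigma'\tau$ we are currently establishing — but there is no genuine obstacle beyond that.
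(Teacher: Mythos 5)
Your proof is correct and follows the same route the paper intends: the paper's proof is just ``by induction on the type,'' and your simultaneous induction establishing symmetry and transitivity at each type (deriving $e=_{\sigma'}e$ from the lower-type induction hypotheses where needed, and recovering $a=_\sigma a$ from symmetry plus transitivity) is the natural expansion of that one-liner. No gaps.
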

\begin{proof}
By induction on the type. 	
\end{proof} 

 \begin{definition}[Internalization of objects of finite type]
For $a\in A_\sigma$, we define $\tuep a\sigma\in\va$    as follows:
\begin{itemize}
	\item if $a=\bar n$, let $\tuep ao =\{ \pair {\bar m,\bar m,\tuep{\bar{m}}o}\colon m<n\}$;
	\item if $a\in A_{\sigma\tau}$, let $\tuep a{\sigma\tau}=\{ \pair{c,d,\vpair{c^\sigma,\tuep e\tau}}     \colon c=_\sigma d\text{ and } ac\simeq e\}$. 
\end{itemize}

Finally, for any finite type $\sigma$, let 
\[  \dot F_\sigma=\{ \pair{a,b,\tuep a{\sigma}}\colon a=_\sigma b\} \]
be our name for $F_\sigma$.
\end{definition}

Note that $\dot F_o=\dot \omega$, where $\dot\omega$ is the name for $\omega$ used to realize the infinity axiom in the proof of Theorem \ref{czf sound}. \\

\begin{notation} Write $\fo \vp$ for $\exists a, b\in A\, (a=b\fo \vp)$.
\end{notation}

\begin{lemma}[Absoluteness and uniqueness up to extensional equality]\label{abs}
 For all $a,b\in A_\sigma$,
\begin{itemize}
	\item $\fo \tuep a\sigma=\tuep b\sigma$ implies $a=_\sigma b$,
	\item $a=_\sigma b$ implies $\tuep a\sigma=\tuep b\sigma$. 
\end{itemize}
\end{lemma}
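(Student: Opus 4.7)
\medskip

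\noindent\textbf{Proof plan.}

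The proof is by induction on the complexity of the finite type $\sigma$. For the second bullet I would read the conclusion as $\fo \tuep a\sigma=\tuep b\sigma$ (since for higher types the two sets literally differ when $a\neq b$) and in fact strengthen it to a \emph{uniform} statement: for every $\sigma\in\ft$ there is a closed application term $\mb{eq}_\sigma$ such that $\mb{eq}_\sigma\fo \tuep a\sigma=\tuep b\sigma$ whenever $a=_\sigma b$. This strengthening is forced on us in the induction step, because in handling $\sigma\tau$ one must realize $\tuep{ac}\tau=\tuep{bc}\tau$ uniformly in the argument $c\in A_\sigma$, and the bare existential form of the hypothesis provides no such uniform witness.

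\smallskip

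\noindent\emph{Base type $o$.} For the second bullet, $a=_o b$ forces $a=b=\bar n$, so $\tuep a o=\tuep b o$ literally and $\mb{eq}_o:=\mb{i_r}$ works by Lemma \ref{equality}(1). For the first bullet, given $a=\bar n$, $b=\bar m$, I proceed by well-founded induction on $\max(n,m)$. Assuming $n<m$ for contradiction, the triple $\pair{\bar n,\bar n,\tuep{\bar n}o}\in\tuep{\bar m}o$ and the backward clause of $e=f\fo\tuep{\bar n}o=\tuep{\bar m}o$ produce some $k<n$ with $\fo \tuep{\bar n}o=\tuep{\bar k}o$. Since $\max(n,k)=n<\max(n,m)$, the inductive hypothesis forces $n=k$, contradicting $k<n$.

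\smallskip

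\noindent\emph{Inductive step $\sigma\tau$.} For the forward direction, suppose $e=f\fo \tuep a{\sigma\tau}=\tuep b{\sigma\tau}$ and pick $c=_\sigma d$. Since $\pair{c,d,\vpair{\tuep c\sigma,\tuep{ac}\tau}}\in\tuep a{\sigma\tau}$, the forward clause yields $(ec)_0=(fd)_0\fo \vpair{\tuep c\sigma,\tuep{ac}\tau}\in\tuep b{\sigma\tau}$, which on unfolding produces some $c''=((ec)_0)_0$ with $c''=_\sigma((fd)_0)_0$ and a realizer of $\vpair{\tuep c\sigma,\tuep{ac}\tau}=\vpair{\tuep{c''}\sigma,\tuep{bc''}\tau}$. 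Applying $\mb w$ from Lemma \ref{pairs} extracts realizers of $\tuep c\sigma=\tuep{c''}\sigma$ and $\tuep{ac}\tau=\tuep{bc''}\tau$. The inductive hypothesis (first bullet) at types $\sigma$ and $\tau$ gives $c=_\sigma c''$ and $ac=_\tau bc''$. Then $c''=_\sigma d$ by the symmetry/transitivity lemma preceding the statement, so $bc''=_\tau bd$ since $b=_{\sigma\tau}b$, and finally $ac=_\tau bc''=_\tau bd$, as required.

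\smallskip

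\noindent\emph{Inductive step, backward (uniform) direction.} By Theorem \ref{int sound} together with Lemma \ref{pairs}, the provable implication $x=x'\land y=y'\imp \vpair{x,y}=\vpair{x',y'}$ has a closed realizer, say $\mb{pc}$. Using the inductive $\mb{eq}_\tau$ (noting that $a=_{\sigma\tau}b$ and $c=_\sigma c$ force $ac=_\tau bc$), the term $\mb{pc}'\simeq \mb{pc}(\mb p\,\mb{i_r}\,\mb{eq}_\tau)$ realizes $\vpair{\tuep c\sigma,\tuep{ac}\tau}=\vpair{\tuep c\sigma,\tuep{bc}\tau}$ uniformly in $c$. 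I then define
\[ \mb{eq}_{\sigma\tau}\;:=\;\lambda x.\mb p\,(\mb p\, x\,\mb{pc}')\,(\mb p\, x\,(\mb{i_s}\,\mb{pc}')). \]
A routine check against the clauses for $=$ verifies both directions: on the forward side the target triple in $\tuep b{\sigma\tau}$ is $\pair{c,d,\vpair{\tuep c\sigma,\tuep{bc}\tau}}$ and $\mb{pc}'$ realizes the required equality; on the backward side the target triple in $\tuep a{\sigma\tau}$ is $\pair{c,d,\vpair{\tuep c\sigma,\tuep{ac}\tau}}$ and $\mb{i_s}\,\mb{pc}'$ (the symmetric version via Lemma \ref{equality}(2)) does the job.

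\smallskip

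\noindent\emph{Main obstacle.} The real subtlety is the uniformity needed in the backward inductive step: the non-uniform form of the second bullet cannot be propagated through function types because there is no way to select, inside the pca, a realizer of $\tuep{ac}\tau=\tuep{bc}\tau$ depending on $c$ unless such a realizer is produced by a closed term. Strengthening to a single closed $\mb{eq}_\sigma$ (together with the congruence combinator $\mb{pc}$ obtained from Theorem \ref{int sound} and Lemma \ref{pairs}) resolves this cleanly; the remaining bookkeeping with projections and pair combinators is mechanical.
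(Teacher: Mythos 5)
Your forward (absoluteness) direction is essentially the paper's argument and is fine. The problem is the second bullet. You have reinterpreted it as the realizability statement $\fo \tuep a\sigma=\tuep b\sigma$, on the grounds that ``for higher types the two sets literally differ when $a\neq b$.'' That premise is false, and the reinterpretation changes the lemma into something weaker than what is asserted and used. The second bullet really does claim literal set-theoretic equality of the names in the background universe: the map $a\mapsto\tuep a\sigma$ is constant on $=_\sigma$-classes (this is the ``uniqueness'' in the lemma's title). It holds because $\tuep a{\sigma\tau}$ depends on $a$ only through the third components $\vpair{\tuep c\sigma,\tuep{e}\tau}$ with $ac\simeq e$; if $a=_{\sigma\tau}b$ and $c=_\sigma d$, then $ac=_\tau bc$, so the induction hypothesis (at type $\tau$, in its literal form) gives $\tuep{ac}\tau=\tuep{bc}\tau$ as sets, whence the triple $\pair{c,d,\vpair{\tuep c\sigma,\tuep{ac}\tau}}$ is \emph{identical} to the corresponding triple of $\tuep b{\sigma\tau}$; by symmetry the two names coincide. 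This is the paper's entire proof of that direction, and your ``main obstacle'' about uniformity of realizers evaporates: literal equality needs no realizers and propagates through the induction for free.

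The distinction matters downstream. The proof of Theorem \ref{choice} concludes ``By uniqueness, $\tuep{e_0}\tau=\tuep{e_1}\tau$'' and then applies a single realizer $\mb i\fo z=y_0\land z=y_1\imp y_0=y_1$ to the \emph{one} set $z=\tuep{e_0}\tau=\tuep{e_1}\tau$; and the proof of Theorem \ref{arrow} rewrites $\tuep g{\sigma\tau}$ as $\{\pair{c,d,\vpair{\tuep c\sigma,\tuep e\tau}}\colon c=_\sigma d\land (a_{10}c)_0=_\tau e\}$, which is only a correct description of that set because $\tuep e\tau$ depends solely on the $=_\tau$-class of $e$. Your uniform realizer $\mb{eq}_\sigma$ is a true and constructible statement (and would let one patch those later arguments with extra applications of $\mb{i_s}$ and $\mb{i_t}$), but it is not the lemma as stated, and proving it is strictly more work than the intended one-line induction. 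You should prove the literal equality first; your $\mb{eq}_\sigma$ then becomes the trivial corollary $\mb{i_r}\fo\tuep a\sigma=\tuep b\sigma$.
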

\begin{proof}
By induction on the type. 

Type $o$. Let $a=\bar n$ and $b=\bar m$ with $n,m\in\omega$. Suppose $\fo  \tuep a{o}=\tuep bo$. By a double arithmetical induction one shows  $n=m$.  The second part is obvious as $a=_ob$ implies $a=b$.

Type $\sigma\tau$. Let $a,b\in A_{\sigma\tau}$. Suppose $\fo \tuep a{\sigma\tau}=\tuep b{\sigma\tau}$. The aim is to show that $a=_{\sigma\tau} b$.
Let $c\in A_{\sigma}$ and  $ac\simeq e$. Then $\fo \vpair{\tuep c\sigma,\tuep e\tau}\in \tuep a{\sigma\tau}$ and hence  $\fo \vpair{\tuep c\sigma,\tuep e\tau}\in \tuep b{\sigma\tau}$.
>From the latter we infer that there exist $c_0\in A_{\sigma}$ and $e_0\in A_{\tau}$ such that $bc_0\simeq e_0$ and $\fo \vpair{\tuep c\sigma,\tuep e\tau}=\vpair{\tuep {c_0}\sigma,\tuep {e_0}\tau}$. By the properties of internal pairing, we obtain $\fo \tuep c\sigma=\tuep {c_0}\sigma\;\wedge\; \tuep e\tau = \tuep {e_0}\tau$ giving
$c=_{\sigma}c_0$ and $e=_{\tau} e_0$ by the induction hypothesis. Whence $ac=_{\tau}bc_0=_{\tau}bc$ as $b \in A_{\sigma\tau}$. As a result one has 
$ac=_{\tau}bd$ whenever $c=_{\sigma}d$, yielding $a=_{\sigma\tau}b$. 

For the second part, suppose $a=_{\sigma\tau} b$. An element of $\tuep a{\sigma\tau}$ is of the form $\pair{c,d,\vpair{\tuep c{\sigma},\tuep e\tau}}$ where $c=_{\sigma}d$ and $ac\simeq e$. Let $e_0\simeq bc$. As $ac=_{\tau}bc$ the induction hypothesis yields $\tuep e\tau =\tuep {e_0}\tau$, and hence 
$\pair{c,d,\vpair{\tuep c{\sigma},\tuep e\tau}}=\pair{c,d,\vpair{\tuep c{\sigma},\tuep {e_0}\tau}}\in \tuep b{\sigma\tau}$, showing  $\tuep a{\sigma\tau}\subseteq\tuep b{\sigma\tau}$.
Owing to the symmetry of the argument, we can conclude that  $\tuep a{\sigma\tau}=\tuep b{\sigma\tau}$.
\end{proof}

\begin{theorem}[Choice]\label{choice}
There exists a closed application term $\mb e$ such that  $\czf$ proves
\[  \mb e\fo \forall x\in \dot F_\sigma\, \exists y\in \dot F_\tau\, \vp(x,y)\imp \exists f\colon \dot F_\sigma\to \dot F_\tau\, \forall x\in \dot F_\sigma\, \vp(x,f(x)), \]
for all finite types $\sigma$ and $\tau$ and for every formula $\vp$.
\end{theorem}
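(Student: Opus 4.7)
The plan is to extract a genuine element $f$ of $A_{\sigma\tau}$ from any realizer of the hypothesis, take its HEO-style internalization $\tuep{f}{\sigma\tau}\in\va$ as the set-theoretic witness, and verify that this name is realized as a choice function. The conceptual payoff of having built extensionality into $\va$ is precisely that a realizer of a universal statement over $\dot F_\sigma$ already acts extensionally on representatives from $A_\sigma$, so it induces a bona fide element of $A_{\sigma\tau}$; Lemma \ref{abs} then certifies that the passage to the name is faithful.

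First I would unpack the hypothesis. Suppose $h_1=h_2\fo \forall x\in\dot F_\sigma\,\exists y\in\dot F_\tau\,\vp(x,y)$. Tracing through the realizability clauses for bounded universal and existential quantifiers, together with the fact that the elements of $\dot F_\tau$ are exactly the triples $\pair{e,e',\tuep{e}{\tau}}$ with $e=_\tau e'$, one obtains, for every $c=_\sigma d$, that $(h_1c)_0=_\tau (h_2d)_0$ (since the third component of the realizing element of $\dot F_\tau$ is forced to be $\tuep{(h_1c)_0}{\tau}$) and $(h_1c)_1=(h_2d)_1\fo \vp(\tuep{c}{\sigma},\tuep{(h_1c)_0}{\tau})$. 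Now set $f:=\lambda c.(h_1c)_0$. To see that $f\in A_{\sigma\tau}$, apply the previous observation once with the pair $(c,d)$ and once with the pair $(d,d)$ and combine with symmetry and transitivity of $=_\tau$ to conclude $fc=_\tau fd$ whenever $c=_\sigma d$.

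The witness is $\tuep{f}{\sigma\tau}\in\va$. I would construct closed application terms realizing, uniformly in $\sigma$ and $\tau$, the three statements making $\tuep{f}{\sigma\tau}$ a function $\dot F_\sigma\to\dot F_\tau$: (i) each member of $\tuep{f}{\sigma\tau}$ is an internal ordered pair whose components lie in $\dot F_\sigma$ and $\dot F_\tau$, which is immediate from the definition of $\tuep{f}{\sigma\tau}$ combined with Lemma \ref{pairs}; (ii) totality, using that for each $\pair{c,d,\tuep{c}{\sigma}}\in\dot F_\sigma$ the triple $\pair{c,d,\vpair{\tuep{c}{\sigma},\tuep{fc}{\tau}}}$ already lies in $\tuep{f}{\sigma\tau}$; and (iii) functionality, where, given two members $\vpair{\tuep{c_1}{\sigma},\tuep{fc_1}{\tau}}$ and $\vpair{\tuep{c_2}{\sigma},\tuep{fc_2}{\tau}}$ realizably sharing their first coordinate, Lemma \ref{pairs} yields $\fo \tuep{c_1}{\sigma}=\tuep{c_2}{\sigma}$, Lemma \ref{abs} then gives $c_1=_\sigma c_2$, hence $fc_1=_\tau fc_2$, and finally Lemma \ref{abs} again gives $\fo \tuep{fc_1}{\tau}=\tuep{fc_2}{\tau}$. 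Using the realizer data $(h_1c)_1=(h_2d)_1\fo \vp(\tuep{c}{\sigma},\tuep{fc}{\tau})$ together with the membership identified in (ii), one then packages a realizer of $\forall x\in\dot F_\sigma\,\exists y\in\dot F_\tau\,(\pair{x,y}\in\tuep{f}{\sigma\tau}\land \vp(x,y))$, from which the conclusion of the choice axiom follows.

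The principal obstacle is the extensionality of $f:=\lambda c.(h_1c)_0$. It is exactly the symmetric deployment of the hypothesis against the two pairs $(c,d)$ and $(d,d)$ that produces it, and this manoeuvre is unavailable in non-extensional generic realizability over $\mathrm{V}(A)$, which is precisely why $\ac_{\ft}$ is not automatically validated there. Lemma \ref{abs} is the second indispensable tool, transporting identities between $A_\sigma$ and $\va$ so that both functionality and the set-theoretic encoding of "function from $\dot F_\sigma$ to $\dot F_\tau$" become realizable. Everything else is a careful but routine assembly of projections, pairing combinators, and the realizers already produced in Lemma \ref{pairs} into a single closed term $\mb e$ uniform in $\sigma$, $\tau$, and $\vp$.
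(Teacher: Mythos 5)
Your proposal follows essentially the same route as the paper: the paper's witness $f=\{\pair{c,d,\vpair{\tuep c\sigma,\tuep e\tau}}\colon c=_\sigma d\land e\simeq (ac)_0\}$ is precisely the internalization $\tuep{\lambda c.(ac)_0}{\sigma\tau}$ you propose, the membership in $A_{\sigma\tau}$ is extracted from the hypothesis in the same way (via the partial equivalence relation properties of $=_\tau$), and functionality is verified by the same chain through Lemma \ref{pairs}, absoluteness, $=_\tau$, and uniqueness from Lemma \ref{abs}. The argument is correct and matches the paper's proof in all essentials.
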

\begin{proof}
Suppose $a=b\fo  \forall x\in \dot F_\sigma\, \exists y\in \dot F_\tau\, \vp(x,y)$. By definition, this means that for every $\pair{c,d,\tuep c\sigma}\in \dot F_\sigma$  we have
\[ \tag{1} \pair{(ac)_0,(bd)_0,\tuep e\tau}\in \dot F_\tau, \] 
\[ \tag{2}	(ac)_1=(bd)_1\fo \vp(\tuep c\sigma,\tuep e\tau), \]
where $e\simeq (ac)_0$. 
Let 
\[  f=\{\pair{c,d,\vpair{\tuep c\sigma,\tuep e\tau}}\colon c=_\sigma d\land e\simeq (ac)_0\}. \]
Note that $c=_\sigma d$ implies $(ac)_0\downarrow$ by (1). 

Below we shall use $z=\langle x,y\rangle$ as a somewhat sloppy abbreviation for $\mathrm{OP}(x,y,z)$. We  look for an $\mb e$ such that 
\[ \tag{3}  (\mb ea)_{0}=(\mb eb)_0\fo \forall z\in f\, \exists x\in \dot F_\sigma\, \exists y\in \dot F_\tau\, (z=\pair{x,y}),  \]
\[\tag{4}  (\mb ea)_{10}=(\mb e b)_{10}\fo  \forall x\in \dot F_\sigma\, \exists y\in \dot F_\tau\,  \exists z\in f\, (z=\pair{x,y}\land \vp(x,y)), \]
\[ \tag{5}  (\mb ea)_{11}=(\mb eb)_{11}\fo \forall z_0\in f\, \forall z_1\in f\, \forall x,y_0,y_1\, (z_0=\pair{x,y_0}\land z_1=\pair{x,y_1}\imp y_0=y_1). \]

First, note that $\lambda c.(ac)_0=_{\sigma\tau}\lambda d.(bd)_0$. This follows from (1). In fact, $c=_\sigma d$ implies $(ac)_0=_\tau (bd)_0$, for all $c,d\in A$. Moreover, since this is an equivalence relation, we have $\lambda c.(ac)_0\in A_{\sigma\tau}$. 

For (3), let $\mb e$ be such that 
\begin{align*}
((\mb e a)_0c)_0&\simeq c,  &   ((\mb e a)_0 c)_{10}&\simeq (ac)_0,  \\
& & ((\mb e a)_0c)_{11}&\simeq \mb v, 
\end{align*}
where $\mb v\fo \vpair{x,y}=\pair{x,y}$ for all $x,y\in\va$ as in Lemma \ref{pairs}. Let us show that any such $\mb e$ satisfies (3). Let $\pair{c,d,\vpair{\tuep c\sigma,\tuep e\tau}}\in f$, where $e\simeq (ac)_0$. We would like
\[  (\mb e a)_0c=(\mb e b)_0d\fo \exists x\in \dot F_\sigma\, \exists y\in \dot F_\tau\, (\vpair{\tuep c\sigma,\tuep e\tau}=\pair{x,y}). \]
Now, $\pair{c,d,\tuep c\sigma}\in \dot F_\sigma$, $c\simeq ((\mb e a)_0c)_0$, and $d\simeq ((\mb e b)_0d)_0$. Therefore, we just need to verify
\[  ((\mb ea)_0c)_1=((\mb eb)_0d)_1\fo \exists y\in \dot F_\tau\, \vpair{\tuep c\sigma,\tuep e\tau}=\pair{\tuep c\sigma,y}.\]
Similarly, $\pair{(ac)_0,(bd)_0,\tuep e\sigma}\in \dot F_\tau$ since, as noted before, $(ac)_0=_\tau (bd)_0$. On the other hand, $(ac)_0\simeq  ((\mb e a)_0 c)_{10}$ and $(bd)_0\simeq ((\mb eb)_0d)_{10}$. So we just need to show that 
\[  ((\mb ea)_0c)_{11}=((\mb eb)_0d)_{11}\fo  \vpair{\tuep c\sigma,\tuep e\tau}=\pair{\tuep c\sigma,\tuep e\tau}. \]
Now, $((\mb ea)_0c)_{11}\simeq ((\mb eb)_0d)_{11}\simeq \mb v$, and $\mb v\fo \vpair{\tuep c\sigma,\tuep e\tau}=\pair{\tuep c\sigma,\tuep e\tau}$. So we are done. 

As for (4), Let $\mb e$ be such that 
\begin{align*}
((\mb ea)_{10}c)_0& \simeq (ac)_0,  &   ((\mb ea)_{10}c)_{10}&\simeq (ac)_0, &   ((\mb ea)_{10}c)_{110}&\simeq \mb v, \\
&&&&   ((\mb ea)_{10}c)_{111}&\simeq (ac)_1,
\end{align*}
where $\mb v$ is as in part (3). That $\mb e$ satisfies (4) is proved in similar fashion by  using (1) and (2).

For (5), suppose $\pair{c_i,d_i,z_i}\in f$ with $z_i=\vpair{\tuep {c_i}\sigma,\tuep {e_i}\tau}$ and $e_i\simeq (ac_i)_0$, where $i=0,1$. We are looking for an $\mb e$ such that 
\[   (\mb ea)_{11}c_0c_1=(\mb eb)_{11}d_0d_1\fo  z_0=\pair{x,y_0}\land z_1=\pair{x,y_1}\imp y_0=y_1, \]
for all $x,y_0,y_1\in\va$. Suppose
\[ \tag{6}   g=h\fo z_0=\pair{x,y_0}\land z_1=\pair{x,y_1}. \]
We want $(\mb ea)_{11}c_0c_1g=(\mb eb)_{11}d_0d_1h\fo y_0=y_1$. Unravelling (6), we get 
\[ g_i=h_i\fo \vpair{\tuep {c_i}\sigma,\tuep {e_i}\tau}=\pair{x,y_i}.  \]
By Lemma \ref{pairs},
\[  \mb wg_i=\mb w h_i\fo \tuep {c_i}\sigma=x\land \tuep {e_i}\tau=y_i, \]
for some closed application term $\mb w$.  By the realizabilty of equality, it follows that 
\[ \tag{7} \fo \tuep {c_0}\sigma=\tuep {c_1}\sigma. \] 
Also,
\[  \mb p(\mb w g_0)_1(\mb wg_1)_1=\mb p(\mb w h_0)_1(\mb wh_1)_1\fo \tuep {e_0}\tau=y_0\land \tuep {e_1}\tau=y_1. \]
By absoluteness, (7) implies $c_0=_\sigma c_1$.  As $\lambda c.(ac)_0\in A_{\sigma\tau}$, we have $(ac_0)_0=_\tau (ac_{1})_0$, that is, $e_0=_\tau e_1$. By uniqueness, $\tuep {e_0}\tau=\tuep {e_1}\tau$. By realizability of equality, there is a closed application term $\mb i$ such that 
\[  \mb i\fo z=y_0\land z=y_1\imp y_0=y_1. \]
Therefore $\mb e$ can be chosen such that 
\[  (\mb ea)_{11}c_0c_1g\simeq \mb i(\mb p(\mb w g_0)_1(\mb wg_1)_1)\]
is as required.

By $\lambda$-abstraction, one can find $\mb e$ satisfying (3), (4), and (5). 
\end{proof}

\begin{theorem}[Arrow types]\label{arrow}
There exists a closed application term $\mb e$ such that $\czf$ proves
	\[   \mb e\fo \dot F_{\sigma\tau}= \dot F_\sigma\to \dot F_\tau,  \]
for all finite types $\sigma$ and $\tau$.
\end{theorem}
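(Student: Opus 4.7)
The statement asserts that $\dot F_{\sigma\tau}$ and the set-theoretic function space $\dot F_\sigma \to \dot F_\tau$ coincide in the realizability universe. The plan is to exploit realized extensionality (from Theorem \ref{czf sound}) together with Theorem \ref{int sound} to reduce the problem to realizing the biconditional $\forall z(z\in \dot F_{\sigma\tau} \biimp z \text{ is a function from } \dot F_\sigma \text{ to } \dot F_\tau)$, and then handle the two implications separately.

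For the forward direction, given $\pair{a,b,\tuep a{\sigma\tau}}\in \dot F_{\sigma\tau}$ with $a=_{\sigma\tau}b$, I would produce realizers for the three clauses defining ``$\tuep a{\sigma\tau}$ is a function from $\dot F_\sigma$ to $\dot F_\tau$'':
(i) every element of $\tuep a{\sigma\tau}$ is an ordered pair in $\dot F_\sigma\times \dot F_\tau$, obtained via $\mb v$ from Lemma \ref{pairs} applied to $\vpair{\tuep c\sigma,\tuep{ac}\tau}$;
(ii) totality: for $\pair{c,d,\tuep c\sigma}\in\dot F_\sigma$ take $y:=\tuep{ac}\tau$, using $ac=_\tau bd$ (which follows from $a=_{\sigma\tau}b$) to certify both $\tuep{ac}\tau\in\dot F_\tau$ and $\pair{c,d,\vpair{\tuep c\sigma,\tuep{ac}\tau}}\in \tuep a{\sigma\tau}$;
(iii) functionality: given two elements of $\tuep a{\sigma\tau}$ realizably sharing a first coordinate, the first part of Lemma \ref{abs} gives $c_0=_\sigma c_1$, whence $ac_0=_\tau ac_1$ (since $a\in A_{\sigma\tau}$), and the second part of Lemma \ref{abs} then yields realizable equality of the corresponding second coordinates.

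For the reverse direction, suppose $g\in\va$ together with realizers witnessing that $g$ is a function from $\dot F_\sigma$ to $\dot F_\tau$, and in particular a realizer $r$ of totality $\forall x\in \dot F_\sigma\,\exists y\in \dot F_\tau\,\pair{x,y}\in g$. Unpacking the $\exists y\in \dot F_\tau$ clause, for each $c=_\sigma d$ the witness satisfies $(rc)_{00}=_\tau (rd)_{00}$, so $\mb a:=\lambda c.(rc)_{00}$ lies in $A_{\sigma\tau}$. I would then verify $\fo g=\tuep{\mb a}{\sigma\tau}$: the inclusion $\tuep{\mb a}{\sigma\tau}\subseteq g$ follows by transporting $(rc)_1$ (a realizer of $\pair{\tuep c\sigma,\tuep{\mb ac}\tau}\in g$) through Lemma \ref{pairs} to realize $\vpair{\tuep c\sigma,\tuep{\mb ac}\tau}\in g$; the inclusion $g\subseteq \tuep{\mb a}{\sigma\tau}$ uses the ``elements are ordered pairs'' clause to present any member of $g$ as an ordered pair $\pair{\tuep c\sigma,\tuep e\tau}$, then the realized functionality of $g$ combined with $\pair{\tuep c\sigma,\tuep{\mb ac}\tau}\in g$ to force $\tuep e\tau=\tuep{\mb ac}\tau$ realizably, and finally Lemma \ref{pairs} to identify the element with a member of $\tuep{\mb a}{\sigma\tau}$.

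The main obstacle lies in the reverse direction: one must carefully extract the applicative term $\mb a$ from the realizer $r$ and verify that it lies in $A_{\sigma\tau}$ (which rests on $r$ respecting $=_\sigma$, as forced by the realizability clause for the universal quantifier restricted to $\dot F_\sigma$), and then orchestrate the interplay between the internal pair $\vpair{\cdot,\cdot}$, the set-theoretic ordered pair $\pair{\cdot,\cdot}$ (via Lemma \ref{pairs}), and absoluteness (Lemma \ref{abs}) to align elements of $g$ with elements of $\tuep{\mb a}{\sigma\tau}$. Once both directions are in hand, the desired closed application term $\mb e$ is assembled by $\lambda$-abstraction from the fragments constructed above.
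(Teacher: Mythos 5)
Your proposal is correct and follows essentially the same route as the paper: realize the two inclusions separately, handling the forward direction by realizing the three clauses of ``is a function'' via the internal pairing combinators of Lemma \ref{pairs} together with absoluteness and uniqueness (Lemma \ref{abs}), and handling the reverse direction by extracting an element of $A_{\sigma\tau}$ from the totality realizer and showing the given name is realizably equal to the corresponding canonical name $\tuep{\mb a}{\sigma\tau}$. The only deviations are inessential bookkeeping (e.g.\ the exact projection indices used to extract the witness $\lambda c.(rc)_0$ from the totality realizer).
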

\begin{proof}
We look for $\mb e$ such that  
\[  \mb e_0\fo \forall f\in \dot F_{\sigma\tau}\, (f\colon\dot F_\sigma\to \dot F_\tau), \]
and for every $f\in \va$,
\[  \mb e_1\fo (f\colon \dot F_\sigma\to \dot F_\tau) \imp f\in \dot F_{\sigma\tau}. \]
	
For $\mb e_0$, we need that for all $a=_{\sigma\tau} b$,
\[ \tag{1} (\mb e_0 a)_0=(\mb e_0 b)_0 \fo \forall z\in  \tuep a{\sigma\tau}\, \exists x\in\dot F_\sigma\, \exists y\in\dot F_\tau\, (z=\pair{x,y}),\]
\[ \tag{2} (\mb e_0a)_{10}=(\mb e_0b)_{10}\fo \forall x\in\dot F_\sigma\, \exists y\in\dot F_\tau\, \exists z\in  \tuep a{\sigma\tau}\, (z=\pair{x,y}),\]
\[ \tag{3} (\mb e_0 a)_{11}=(\mb e_0 b)_{11}\fo \forall z_0\in \tuep a{\sigma\tau}\, \forall z_1\in \tuep a{\sigma\tau}\, \forall x,y_0,y_1\, (z_0=\pair{x,y_0}\land z_1=\pair{x,y_1}\imp y_0=y_1). \]

For (1), let $\mb e_0$ be such that 
\[ (\mb e_0a)_0\simeq\lambda c.\mb p c(\mb p (ac)\mb v), \]
where $\mb v\fo \vpair{x,y}=\pair{x,y}$ for all $x,y\in \va$ as in Lemma \ref{pairs}.
	
Let us verify that $\mb e_0$ does the job. Let $a=_{\sigma\tau}b$. 
We want to show
\[    \lambda c.\ap  c(\mb p (ac)\mb v)=\lambda d.\ap  d(\mb p (bd)\mb v)\fo \forall z\in \tuep a{\sigma\tau}\, \exists x\in\dot F_\sigma\, \exists y\in\dot F_\tau\, (z=\pair{x,y}). \]
Let $\pair{c,d,\vpair{ \tuep c{\sigma}, \tuep c{\tau}}}\in  \tuep a{\sigma\tau}$, where $c=_\sigma d$ and $ac\simeq e$.  We want
\[   \ap c(\mb p (ac)\mb v)=\ap d(\mb p (bd)\mb v)\fo \exists x\in \dot F_\sigma\, \exists y\in\dot F_\tau\, (\vpair{ \tuep c{\sigma}, \tuep c{\tau}}=\pair{x,y}). \]
By definition, $\pair{c,d, \tuep c{\sigma}}\in\dot F_\sigma$. Let us check that
	\[ \mb p (ac)\mb v=\ap (bd)\mb v\fo  \exists y\in\dot F_\tau\, (\vpair{ \tuep c{\sigma}, \tuep c{\tau}}=\pair{ \tuep c{\sigma},y}). \]
We have $ac=_\tau bd$ and hence $\pair{ac,bd, \tuep c{\tau}}\in \dot F_\tau$.  
Finally, 
\[ \mb v\fo \vpair{ \tuep c{\sigma}, \tuep c{\tau}}=\pair{ \tuep c{\sigma}, \tuep c{\tau}}. \]
	
For (2), let $\mb e_0$ be such that 
\[   (\mb e_0a)_{10}\simeq\lambda x.\ap (ax)(\ap x\mb v), \]
where $\mb v$ is as above.

For (3), let $\mb e_0$ be such that 
\[  (\mb e_0 a)_{11}c_0c_1g\simeq \mb i (\mb p(\mb w g_0)_1(\mb wg_1)_1),\]
where $\mb w$ and $\mb i$ are as in the proof of Theorem \ref{choice}. \\
	
As for $\mb e_1$, suppose that $f\in\va$ and
	\[   a=b\fo f\colon\dot F_\sigma\to\dot F_\tau. \]
Then
	\[  \tag{4} a_0=b_0\fo \forall z\in f\, \exists x\in\dot F_\sigma\, \exists y\in\dot F_\tau\, (z=\pair{x,y}), \]
	\[ \tag{5} a_{10}=b_{10}\fo \forall x\in\dot F_\sigma\, \exists y\in\dot F_\tau\, \exists z\in f\, (z=\pair{x,y}), \]
	\[  \tag{6} a_{11}=b_{11}\fo \forall z_0\in f\, \forall z_1\in f\, \forall x,y_0,y_1\, (z_0=\pair{x,y_0}\land z_1=\pair{x,y_1}\imp y_0=y_1). \]
	
We aim for 
\[   \mb e_1a=\mb e_1b\fo f\in \dot F_{\sigma\tau}. \]

As in the proof of Theorem \ref{choice}, it follows from (5) that $\lambda c.(a_{10}c)_0=_{\sigma\tau}\lambda d.(b_{10}d)_0$. Therefore
\[  \pair{\lambda c.(a_{10}c)_0,\lambda d.(b_{10}d)_0,  \tuep g{\sigma\tau}}\in \dot F_{\sigma\tau}, \]
where $g:=\lambda c.(a_{10}c)_0$. We thus want $\mb e_1$ such that 
\[   (\mb e_1a)_0\simeq \lambda c.(a_{10}c)_0, \]
\[   (\mb e_1a)_1=(\mb e_1b)_1\fo f= \tuep g{\sigma\tau}. \]

By definition and Lemma \ref{abs},
\[  \tuep g{\sigma\tau}=\{\pair{c,d,\vpair{ \tuep c{\sigma}, \tuep c{\tau}}}\colon c=_\sigma d\land (a_{10}c)_0=_\tau e\}. \]

($\subseteq$) Let $\pair{\tilde{c},\tilde d,z}\in f$. We aim for $((\mb e_1a)_1\tilde c)_0=((\mb e_1b)_1\tilde d)_0\fo z\in  \tuep g{\sigma\tau}$. By (4), $(a_0\tilde c)_0=_\sigma (b_0\tilde d)_0$ and
\[    (a_0\tilde c)_{11}=(b_0\tilde d)_{11}\fo z=\pair{ \tuep c{\sigma}, \tuep c{\tau}}, \]
where  $c\simeq (a_0\tilde c)_0$ and $e\simeq (a_0\tilde c)_{10}$. 
By Lemma \ref{pairs}, let $\mb z$ be a closed application term such that  for all $x,y,z\in\va$,
\[  \mb z\fo z=\pair{x,y}\imp z=\vpair{x,y}. \]
Then 
\[  \mb z(a_0\tilde c)_{11}=\mb z(b_0\tilde d)_{11}\fo z=\vpair{ \tuep c{\sigma}, \tuep c{\tau}}. \]
By using (5), (6) and absoluteness, one obtains $(a_{10}c)_0=_\tau e$. Let $\mb e_1$ satisfy
\begin{align*}
((\mb e_1a)_1\tilde c)_{00}& \simeq (a_0\tilde c)_0,\\
((\mb e_1a)_1\tilde c)_{01}& \simeq \mb z (a_0\tilde c)_{11}.
\end{align*}
Then $\mb e_1$ is as desired. 

($\supseteq$) Let $\pair{c,d,\vpair{ \tuep c{\sigma}, \tuep c{\tau}}}\in  \tuep g{\sigma\tau}$, with $e\simeq (a_{10}c)_0$.  We aim for $((\mb e_1a)_1 c)_1=((\mb e_1b)_1d)_1\fo \vpair{ \tuep c{\sigma}, \tuep c{\tau}}\in f$.

By unravelling (5), we obtain that for some $z\in\va$, 
\[   \pair{(a_{10}c)_{10},(b_{10}d)_{10}, z}\in f, \]
\[   (a_{10}c)_{11}=(b_{10}d)_{11}\fo z=\pair{ \tuep c{\sigma}, \tuep c{\tau}}. \]
Let $\mb e_1$ be such that 
\[   ((\mb e_1a)_1c)_1\simeq \mb p (a_{10}c)_{10} (\mb {i_s}(\mb z(a_{10}c)_{11})), \]
where $\mb z$ is as above.

By $\lambda$-abstraction, one can find $\mb e$ satisfying the above equations.
\end{proof}

\begin{theorem}\label{choice sound}
For all finite types $\sigma$ and $\tau$ there exists a closed application term $\mb c$ such that $\czf$ proves
\[  \mb c\fo  \forall x^\sigma\, \exists y^\tau\, \vp(x,y)\imp \exists f^{\sigma\tau}\, \forall x^\sigma\,  \vp(x,f(x)). \]
\end{theorem}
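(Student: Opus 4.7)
The plan is to reduce the claim to a composition of Theorem \ref{choice}, which delivers choice in the bounded form with arrow-type conclusion, and Theorem \ref{arrow}, which identifies the arrow type with the canonical finite-type name $\dot F_{\sigma\tau}$. Expanding the shorthand, what we need is a uniform realizer for
\[ \forall x\,(x\in\dot F_\sigma\imp \exists y\,(y\in\dot F_\tau\land\vp(x,y)))\imp \exists f\,(f\in \dot F_{\sigma\tau}\land \forall x\,(x\in\dot F_\sigma\imp\vp(x,f(x)))). \]

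First I would assume $a=b\fo \forall x^\sigma\,\exists y^\tau\,\vp(x,y)$ and apply the conversion combinators $\mb u$ and $\mb v$ from the (unlabelled) lemma in Section 5 to transform this into a realizer of the bounded form $\forall x\in\dot F_\sigma\,\exists y\in\dot F_\tau\,\vp(x,y)$; this requires using $\mb u$ and $\mb v$ in their ``unbounded $\to$ bounded'' direction at each quantifier level. Theorem \ref{choice} then produces a realizer of
\[ \exists f\,(f\colon\dot F_\sigma\to\dot F_\tau\;\wedge\;\forall x\in\dot F_\sigma\,\vp(x,f(x))). \]

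Next I would invoke Theorem \ref{arrow} to obtain a realizer of $\dot F_\sigma\to\dot F_\tau=\dot F_{\sigma\tau}$, and combine it with the combinator $\mb{i_0}$ of Lemma \ref{equality} (equal objects preserve membership) to convert the predicate $f\colon\dot F_\sigma\to\dot F_\tau$ into $f\in\dot F_{\sigma\tau}$ inside the existential scope. A final application of $\mb u$ in the ``bounded $\to$ unbounded'' direction turns the inner $\forall x\in\dot F_\sigma$ into $\forall x\,(x\in\dot F_\sigma\imp\cdots)$, producing the desired realizer of the consequent. The term $\mb c$ is then the $\lambda$-abstraction of the overall composition.

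All ingredients are already packaged as closed application terms with explicit witnesses, so no new combinatorial difficulty arises. The only point to watch is careful bookkeeping while shuttling between (i) the bounded and unbounded formulations of the two quantifiers and (ii) the set-theoretic arrow $\dot F_\sigma\to\dot F_\tau$ versus the canonical finite-type name $\dot F_{\sigma\tau}$; both translations are uniform in $\vp$ and in the types $\sigma,\tau$, so this step reduces to routine composition via $\lambda$-abstraction. Conceptually, the non-routine work was already carried out in the proofs of Theorems \ref{choice} and \ref{arrow}, and here one merely glues them together.
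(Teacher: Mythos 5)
Your high-level decomposition --- glue Theorem \ref{choice} to Theorem \ref{arrow} --- is the same as the paper's, but there is a genuine gap in how you expand the shorthand. You read $\forall x^\sigma$ as $\forall x\,(x\in\dot F_\sigma\imp\cdots)$, i.e.\ as bounded by the canonical \emph{name} $\dot F_\sigma$. But the shorthand unfolds to $\forall x\,(x\in F_\sigma\imp\cdots)$, where $F_\sigma$ is a \emph{defined set} of the background theory, not a constant of the language; as a sentence of set theory, $\ac_{\sigma,\tau}$ therefore has the form $\forall z_\sigma\,\forall z_\tau\,(\vartheta_\sigma(z_\sigma)\land\vartheta_\tau(z_\tau)\imp\psi(z_\sigma,z_\tau))$, where $\vartheta_\sigma$ is the defining formula of $F_\sigma$ and $\psi$ is the choice statement relativized to $z_\sigma,z_\tau$. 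Realizing this is strictly more than realizing $\psi(\dot F_\sigma,\dot F_\tau)$: the realizer must work uniformly for \emph{every} $z\in\va$ that is realized to satisfy $\vartheta_\sigma$, not just for the particular name $\dot F_\sigma$. The stronger form is what Corollary \ref{czf choice sound} requires, since a proof in $\czf+\ac_{\ft}$ accesses $F_\sigma$ only through its definition.

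The bridge you omit is essentially the whole content of the paper's proof: (i) show, by induction on types using Theorem \ref{arrow}, that $\mb e_\sigma\fo\vartheta_\sigma(\dot F_\sigma)$ for some closed term $\mb e_\sigma$; (ii) since $\czf\vdash\vartheta_\sigma(z_0)\land\vartheta_\sigma(z_1)\imp z_0=z_1$, extract from soundness (Theorem \ref{czf sound}) a realizer $\mb u_\sigma$ of this uniqueness; (iii) again by soundness, obtain realizers transporting $\psi$ along the realized equalities $z_\sigma=\dot F_\sigma$ and $z_\tau=\dot F_\tau$; and only then compose with the term of Theorem \ref{choice}. Note that step (i) is where Theorem \ref{arrow} does its real work --- it is what certifies $\dot F_{\sigma\tau}$ as a realized witness of $\vartheta_{\sigma\tau}$ --- rather than merely converting ``$f\colon\dot F_\sigma\to\dot F_\tau$'' into ``$f\in\dot F_{\sigma\tau}$'' inside the existential; for that conversion one applies the $\mb e_1$ half of Theorem \ref{arrow} directly, since ``$\dot F_\sigma\to\dot F_\tau$'' is a predicate and not a set-parameter to which Lemma \ref{equality} could be applied.
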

\begin{proof}
A proof is obtained by combining Theorem \ref{choice} and Theorem \ref{arrow}.
Let 
\[ \vartheta_0(z):=\text{\lq $z$ is the set of natural numbers\rq}, \]  
\[ \vartheta_{\sigma\tau}(z):=\exists x\, \exists y\, (\vartheta_\sigma(x)\land \vartheta_{\tau}(y)\land z=x\to y). \] 
We are claiming that for all finite types $\sigma$ and $\tau$ there exists a closed application term $\mb c_{\sigma\tau}$ such that $\czf$ proves
\[   \mb c_{\sigma\tau}\fo \forall z_\sigma\, \forall z_\tau\, (\vartheta_\sigma(z_\sigma)\land \vartheta_\tau(z_\tau)\imp \psi(z_\sigma,z_\tau)), \]
where $\psi(z_\sigma,z_\tau)$ is
\[    \forall x\in z_\sigma\, \exists y\in z_\tau\, \vp(x,y)\imp \exists f\colon z_\sigma\to z_\tau\, \forall x\in z_\sigma\, \vp(x,f(x)). \] 
Let $\mb e_0$ be such that $\mb e_0\fo \vartheta_0(\dot \omega)$. By using $\mb e_0$ and  Theorem \ref{arrow}, for every finite type $\sigma$, we can find $\mb e_\sigma$ such that $\mb e_\sigma\fo \vartheta_\sigma(\dot F_\sigma)$. As $\czf\vdash \vartheta_\sigma(z_0)\land \vartheta_\sigma(z_1)\imp z_0=z_1$, by soundness (Theorem \ref{czf sound})  there is a $\mb u_\sigma$ such that 
\[   \mb u_\sigma\fo  \vartheta_\sigma(z_0)\land \vartheta_\sigma(z_1)\imp z_0=z_1 \]
for all $z_0,z_1\in\va$. By soundness as well,  there are $\mb i_{\sigma\tau}$ and $\mb j_{\sigma\tau}$ such that 
\begin{align*}
\mb i_{\sigma\tau}&\fo \psi(\dot F_\sigma,\dot F_\tau)\land z_\sigma=\dot F_\sigma \imp \psi(z_\sigma,\dot F_\tau), \\
\mb j_{\sigma\tau}&\fo  \psi(z_\sigma,\dot F_\tau)\land z_\tau=\dot F_\tau\imp \psi(z_\sigma,z_\tau), 
\end{align*}
for all $z_\sigma,z_\tau\in \va$. Finally, with the aid of $\mb e_\sigma$, $\mb e_\tau$, $\mb u_\sigma$, $\mb u_\tau$, $\mb i_{\sigma\tau}$, $\mb j_{\sigma\tau}$, and of the closed application term $\mb e$ from Theorem \ref{choice}, one can construct $\mb c_{\sigma\tau}$ as desired. 
\end{proof}

\begin{corollary}\label{czf choice sound}
For every theorem $\vp$ of $\czf+\ac_{\ft}$, there is a closed application term $\mb e$ such that  $\czf$ proves $\mb e\fo \vp$.  In particular, $\czf+\ac_{\ft}$ is consistent relative to $\czf$.	
\end{corollary}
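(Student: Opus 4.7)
The plan is a straightforward induction on the length of a derivation in $\czf+\ac_{\ft}$. All axioms are already known to admit closed realizers: the logical axioms of intuitionistic first-order logic with equality by Theorem~\ref{int sound}, the non-logical axioms of $\czf$ by Theorem~\ref{czf sound}, and every instance of the scheme $\ac_{\ft}$ by Theorem~\ref{choice sound}. Since any given derivation uses only finitely many axioms, it remains to verify that the inference rules preserve realizability.

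For modus ponens, suppose $\mb e\fo\vp\imp\psi$ and $\mb a\fo\vp$; unfolding the implication clause one has $\mb e c=\mb e d\fo\psi$ whenever $c=d\fo\vp$, so in particular $\mb e\mb a=\mb e\mb a\fo\psi$, giving $\mb e\mb a$ as a realizer for $\psi$. Generalization is immediate from the generic clause $a=b\fo\forall x\,\vp \Leftrightarrow \forall x\in\va\,(a=b\fo\vp)$: the very same realizer works for $\vp$ and for $\forall x\,\vp$. Alternatively, one can bypass the inference-rule induction entirely by bundling the finitely many non-logical axioms used in a proof of $\vp$ into a single conjunction $\Gamma$, feeding the implication $\Gamma\imp\vp$, which is provable in pure intuitionistic logic, into Theorem~\ref{int sound}, and applying the resulting realizer to the realizers for the conjuncts of $\Gamma$ supplied by the other two soundness theorems.

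For the consistency claim, note that $\czf$ proves $\neg\bot$ (it is nothing more than $\bot\imp\bot$), so by the soundness statement just established there is a closed application term $\mb f$ with $\czf\vdash \mb f\fo\neg\bot$, which by the realizability clause for negation amounts to $\czf\vdash \forall c,d\in A\,\neg(c=d\fo\bot)$. If $\czf+\ac_{\ft}$ derived $\bot$, soundness would produce a closed $\mb e$ with $\czf\vdash \mb e=\mb e\fo\bot$, contradicting the previous statement inside $\czf$. The essential content of the corollary is therefore already contained in Theorems~\ref{czf sound} and~\ref{choice sound}; the remaining work is purely a matter of stringing these realizers together, which is what makes this step a corollary rather than a theorem in its own right and means no genuinely new obstacle is encountered.
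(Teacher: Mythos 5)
Your proposal is correct and follows essentially the same route as the paper, whose proof is simply ``By Theorem~\ref{czf sound} and Theorem~\ref{choice sound}'': the logical rules are already absorbed by Theorem~\ref{int sound}, the $\czf$ axioms by Theorem~\ref{czf sound}, and the $\ac_{\ft}$ instances by Theorem~\ref{choice sound}, so only the routine combination you describe remains. The only cosmetic caveat is that the paper's language has no primitive $\bot$ (negation has its own realizability clause), so the consistency argument is best phrased via a concrete refutable sentence such as $\forall x\,\neg(x=x)$ against $\mb{i_r}\fo x=x$, but this changes nothing of substance.
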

\begin{proof}
By Theorem \ref{czf sound} and Theorem \ref{choice sound}.	
\end{proof}
\begin{corollary}
$\czf+\ac_{\ft}$ is conservative over  $\czf$ with respect to $\Pi^0_2$ sentences. 
\end{corollary}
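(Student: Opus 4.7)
The plan is to combine Corollary \ref{czf choice sound} with a preservation lemma stating that realizability coincides with truth for $\Delta_0$ arithmetic formulas evaluated on canonical names of natural numbers. Fix any pca $A$ over $\omega$ inside the background $\czf$. A $\Pi^0_2$ sentence has the form $\Phi\equiv\forall n\in\omega\,\exists m\in\omega\,\vartheta(n,m)$ with $\vartheta$ a $\Delta_0$ arithmetic formula. Suppose $\czf+\ac_{\ft}\vdash\Phi$. By Corollary \ref{czf choice sound}, $\czf$ proves $\mb e\fo\Phi$ for some closed application term $\mb e$. Working in the background $\czf$ and unraveling the clauses at $\dot\omega=\{\pair{\bar n,\bar n,\dot n}\colon n\in\omega\}$ (where $\dot\omega$ is the infinity-name used in Theorem \ref{czf sound}), for every $n\in\omega$ one has $\mb e\bar n\fo \exists m\in\dot\omega\,\vartheta(\dot n,m)$, which forces $(\mb e\bar n)_0\simeq \bar m$ for some concrete $m\in\omega$ and $(\mb e\bar n)_1\fo \vartheta(\dot n,\dot m)$.

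The main ingredient is a lemma, proved in the background $\czf$, to the effect that for every $\Delta_0$ arithmetic formula $\vartheta(x_1,\ldots,x_k)$ and all natural numbers $n_1,\ldots,n_k$,
\[\vartheta(n_1,\ldots,n_k)\;\Longleftrightarrow\;\mathord{\fo}\,\vartheta(\dot n_1,\ldots,\dot n_k).\]
This is proved by induction on the construction of $\vartheta$. For the atomic cases, unfolding the clauses for $\in$ and $=$ together with the definition $\dot n=\{\pair{\bar k,\bar k,\dot k}\colon k<n\}$ yields, by a straightforward arithmetic induction, $\fo\dot n\in\dot m\Leftrightarrow n<m$ and $\fo\dot n=\dot m\Leftrightarrow n=m$; canonical witnesses can be assembled from $\mb{i_r}$ of Lemma \ref{equality}. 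The propositional connectives are routine. The bounded quantifier cases exploit the dedicated clauses for $\forall x\in y$ and $\exists x\in y$ in the definition of realizability: a realizer of $\exists x\in\dot m\,\vartheta(x)$ produces some $k<m$ and a realizer of $\vartheta(\dot k)$, and conversely any $k<m$ with $\vartheta(k)$ furnishes, via the inductive hypothesis, a realizer of the bounded existential; the universal case is symmetric and uniform in $k<m$.

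Combining the two ingredients, from $(\mb e\bar n)_1\fo \vartheta(\dot n,\dot m)$ the preservation lemma delivers $\vartheta(n,m)$ in $\czf$. Hence, in $\czf$, for each $n\in\omega$ there exists $m\in\omega$ with $\vartheta(n,m)$, so $\czf\vdash\Phi$. The only delicate step is the preservation lemma; no serious obstacle arises because on canonical names of natural numbers the relation $a=b\fo\cdot$ behaves extensionally as intended (as already observed in the introductory remark on finite type objects), so the deviation from a genuine PER discussed there is immaterial.
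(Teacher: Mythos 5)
Your proposal is correct and follows essentially the same route as the paper: apply Corollary \ref{czf choice sound} to get a realizer of the $\Pi^0_2$ sentence at $\dot\omega$, unravel the bounded-quantifier clauses to extract, for each $n$, a numeral $\bar m$ and a realizer of $\vartheta(\dot n,\dot m)$, and then invoke the lemma that realizability coincides with truth for bounded arithmetic formulas on canonical names. The only difference is that you sketch the induction proving that lemma, whereas the paper cites it as routine (referring to McCarty).
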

\begin{proof}  
Let $\vp(x,y)$ be a bounded formula with displayed free variables and suppose that 
\[ \forall x\in\omega\, \exists y\in\omega\, \vp(x,y)\footnote{Of course we mean that, e.g., $\forall z\, (\vartheta_0(z)\imp \forall x\in z\, \exists y\in z\, \vp(x,y)))$
is provable, where $\vartheta_0(z)$ is a formula defining $\omega$.} \]
is provable in $\czf$ plus  $\ac_{\ft}$.
By the corollary above, we can find a closed application term $\mb e$ such that
\[ \czf\vdash \mb e\fo \forall x\in\dot\omega\, \exists y\in\dot\omega\, \vp(x,y). \]
In particular, 
\[ \czf\vdash \forall n\in\omega\, \exists m\in\omega\, (e\bar n)_1\fo \vp(\dot n,\dot m). \]
It is a routine matter (cf.\ also \cite[Chapter 4, Theorem 2.6]{M84}) to show that  realizability equals truth for  bounded  arithmetic formulas, namely,
\[ \czf\vdash \forall n_1,\ldots, n_k\in\omega\, (\psi(n_1,\ldots,n_k)\biimp \exists a,b\in A\, (a=b\fo \psi(\dot n_1,\ldots,\dot n_k)), \]
for $\psi(x_1,\ldots,x_k)$ bounded with all the free variables shown.   
We can then conclude 
\[ \czf \vdash \forall x\in\omega\, \exists y\in\omega\, \vp(x,y). \] 
\end{proof}

\section{Soundness for $\izf$}\label{sec IZF}
The theory $\izf$  (Intuitionistic Zermelo-Fraenkel set theory) shares the logic and language of $\czf$. Its axioms are 

1. \textbf{Extensionality},  

2. \textbf{Pairing},  

3. \textbf{Union},  

4. \textbf{Infinity},  

5. \textbf{Set induction},

6. \textbf{Separation}: $\forall x\, \exists y\, \forall z\, (z\in y\biimp z\in x\land \vp(z))$, for all formulae $\vp$,

7. \textbf{Collection}: $\forall u\in x\, \exists v\, \vp(u,v)\imp \exists y\, \forall u\in x\, \exists v\in y\, \vp(u,v)$, for all formulae $\vp$,

8.	\textbf{Powerset}: $\forall x\, \exists y\, \forall z\, (\forall u\in z\, (u\in x)\imp z\in y)$.\\

Thus $\izf$ is a strengthening of $\czf$ with bounded separation replaced by full separation and subset collection replaced by powerset. Note that powerset implies subset collection and strong collection follows from separation and collection.

Note that in $\izf$, due to the presence of powerset, the construction of $\va$ can proceed by transfinite recursion along the ordinals (cf.\ \cite{M84}).

\begin{theorem}\label{izf choice sound}
For every theorem $\vp$ of $\izf+\ac_{\ft}$, there is a closed application term $\mb e$ such that $\izf$ proves $\mb e\fo \vp$.  In particular, $\izf+\ac_{\ft}$ is consistent relative to $\izf$.	
\end{theorem}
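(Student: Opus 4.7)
The plan is to mirror the argument for $\czf$ given in Theorem \ref{czf sound} and Corollary \ref{czf choice sound}, replacing the $\czf$-specific constructions (bounded separation, strong collection, subset collection) by realizers for the stronger $\izf$ axioms (full separation, collection, powerset), and then to invoke Theorem \ref{choice sound} verbatim since its proof never used anything beyond $\czf$ in the background. First I would note, as the text already points out, that in $\izf$ the class $\va$ can be introduced by transfinite recursion along the ordinals using powerset, so all the ambient machinery (the relation $a=b\fo\vp$, canonical names $\dot n$, $\dot F_\sigma$, the pairing names $\vset{\cdot}$, $\vpair{\cdot,\cdot}$, Lemma \ref{equality}, Theorem \ref{int sound}) transfers without change, and the realizers $\mb{i_r},\mb{i_s},\mb{i_t},\mb{i_0},\mb{i_1}$ stay the same.

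Next I would check the axioms of $\izf$ one by one, reusing from Theorem \ref{czf sound} the realizers for extensionality, pairing, union, infinity, and set induction unchanged. For \textbf{Separation}, given $x\in\va$ and an arbitrary formula $\vp(u)$ with parameters in $\va$, I would define, using full separation in the background $\izf$,
\[ y=\{\pair{\ap ac,\ap bd,u}:\pair{a,b,u}\in x\,\wedge\,c=d\fo \vp(u)\}, \]
and then take exactly the same realizer $\mb e=\mb p e_0 e_1$ with $e_0=\lambda f.\mb p(\mb p f_0\mb{i_r})f_1$ and $e_1=\lambda ac.\mb p(\mb pac)\mb{i_r}$ as in the bounded-separation case; the only difference is that we no longer need Lemma \ref{bounded} to know $y$ is a set, because $\izf$-separation supplies this directly. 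For \textbf{Collection}, given $a=b\fo\forall u\in x\,\exists v\,\vp(u,v)$, I would apply $\izf$-collection to
\[ \forall \pair{c,d,u}\in x\,\exists v\in\va\,(ac=bd\fo\vp(u,v)) \]
to obtain a set $y\subseteq\va$ witnessing existence, package it as $y'=\{\pair{c,d,v}:\pair{c,d,u}\in x\,\wedge\,v\text{ chosen as above}\}\in\va$, and use the realizer $\lambda a.\lambda c.\ap c(ac)$ (the existential half of the strong-collection realizer of Theorem \ref{czf sound}) to verify $\forall u\in x\,\exists v\in y'\,\vp(u,v)$; collection in $\izf$ does not require the second conjunct, which simplifies matters.

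The main obstacle is the \textbf{Powerset} axiom, which has no $\czf$ analogue and forces us to construct a genuinely new name. Given $x\in\va$, I set
\[ T_x=\{\pair{a,b,z}:a,b\in A\,\wedge\,\exists a',b'\,\pair{a',b',z}\in x\}, \]
which is a set by separation, and then form
\[ \dot\ps(x)=\{\pair{\mb 0,\mb 0,y}:y\subseteq T_x\}, \]
using powerset in the background to know $\ps(T_x)$ is a set; note $\dot\ps(x)\in\va$. I would then exhibit a closed term $\mb e$ such that $\mb e\fo\forall z\,(\forall u\in z\,(u\in x)\imp z\in\dot\ps(x))$ by, given $z\in\va$ and a realizer $a=b\fo\forall u\in z\,(u\in x)$, forming the set
\[ y=\{\pair{(ac)_{00},(bd)_{00},w}:\pair{c,d,u}\in z\,\wedge\,\pair{(ac)_{00},(bd)_{00},w}\in x\,\wedge\,(ac)_{01}=(bd)_{01}\fo u=w\}, \]
which is a subset of $T_x$ by construction, and checking that a simple $\lambda$-term taking $a$ to $\mb p\mb 0(\lambda c.\ldots)$ realizes $z\in\dot\ps(x)$. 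The delicate point is ensuring that membership in $\dot\ps(x)$ implies realized inclusion in $x$, which is immediate because every $\pair{a',b',z}\in T_x$ is equipped with realizers $a',b'$ of $z\in x^{A}$, so that inclusion $y\subseteq T_x$ translates into a realizer of $\forall u\in y\,(u\in x)$ via the identity combinator.

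Finally, having realized all axioms of $\izf$, I would observe that the proofs of Theorems \ref{choice}, \ref{arrow}, and \ref{choice sound} use only the apparatus developed over $\czf$ and therefore apply a fortiori over $\izf$, yielding a closed term $\mb c$ realizing every instance of $\ac_{\ft}$. Combining with the realizer of intuitionistic predicate logic (Theorem \ref{int sound}) and the realizers of the $\izf$-axioms above gives, by induction on derivations in $\izf+\ac_{\ft}$, a closed application term realizing any theorem $\vp$; the consistency clause then follows because a realizer of $\bot$ would give, via the clause for negation, a contradiction in $\izf$.
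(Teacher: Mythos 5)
Your overall strategy is the right one, and everything except Powerset goes through essentially as in the paper (in fact Collection needs no separate treatment at all, since it is a consequence of Strong Collection and is therefore already realized via Corollary \ref{czf choice sound}). The genuine gap is in your Powerset construction. Having defined
\[ y=\{\pair{(ac)_{00},(bd)_{00},w}\colon \pair{c,d,u}\in z\wedge \pair{(ac)_{00},(bd)_{00},w}\in x\wedge (ac)_{01}=(bd)_{01}\fo u=w\}, \]
you must exhibit a uniform realizer of $z=y$ in order to conclude $z\in\dot\ps(x)$. The left-to-right half is fine, but the right-to-left half of the equality clause demands a term $t$ such that for every $\pair{e,f,w}\in y$ one has $(te)_1=(tf)_1\fo w\in z$. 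Here $e=(ac)_{00}$, and from $(ac)_{00}$ alone $t$ cannot recover the index $c$, nor the realizer $(ac)_{01}$ of $u=w$; so there is no way to compute a witness $\pair{g,h,u'}\in z$ together with a realizer of $w=u'$. As written, this direction fails. The repair is standard: tag the elements of $y$ with the original indices, e.g.\ take them to be $\pair{c,d,w}$ (or $\pair{\mb p c(ac)_{01},\mb p d(bd)_{01},w}$) for the same data; $y$ is still a subset of $T_x$, and now $\mb p c(\mb{i_s}(ac)_{01})$ realizes $w\in z$ uniformly. (Your closing remark that $y\subseteq T_x$ yields a realizer of $\forall u\in y\,(u\in x)$ ``via the identity combinator'' is also unfounded, since $T_x$ pairs arbitrary $a,b\in A$ with $w$; but that direction is not part of the Powerset axiom as stated, so nothing is lost there.)

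For comparison, the paper takes a genuinely different route that avoids canonical representatives and the equality argument altogether: using Powerset in the background, it builds the cumulative hierarchy $\va_\alpha=\bigcup_{\beta\in\alpha}\ps(A\times A\times\va_\beta)$ as sets, proves by set induction that any $z\in\va$ with $a=b\fo z\subseteq x$ and $x\in\va_\alpha$ already lies in $\va_\alpha$, and then takes $y=\{\pair{a,b,z}\in A\times A\times \va_\alpha\mid a=b\fo z\subseteq x\}$ with the trivial realizer $\lambda a.\mb pa\mb{i_r}$. That approach trades your realized-equality computation for a rank bound; yours, once the tagging is fixed, is also viable and closer to McCarty's original treatment. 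The final assembly of the theorem (reusing Theorems \ref{int sound}, \ref{czf sound}, \ref{choice sound} and reading off consistency from the clause for negation) matches the paper.
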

\begin{proof}
The soundness for theorems of intuitionistic first order logic with equality follows immediately from Theorem \ref{int sound}.   As for nonlogical axioms, in view of Corollary \ref{czf choice sound}, it is sufficient to deal with separation and powerset. 

The argument for separation is similar to the corresponding argument for bounded separation in the proof of Theorem \ref{czf sound}, employing full separation in the background theory.

It thus remains to address powerset.  
Write $z\subseteq x$ for $\forall u\in z\, (u\in x)$.  We look for $\mb e$ such that for all $x\in\va$ there is a $y\in\va$ such that
	\[  \mb e\fo z\subseteq x \imp z\in y, \]
	for all $z\in\va$. 
	
On account of powerset, in $\izf$, we can define sets $\va_\alpha$, with  $\alpha$ ordinal (i.e., a transitive set of transitive sets), such that $\va=\bigcup_\alpha\va_\alpha$ and $\va_\alpha=\bigcup_{\beta\in\alpha}\ps(A\times A\times \va_\beta)$. Note that in $\czf$ the $\va_\alpha$'s are just classes. 
	
Given $x\in\va_\alpha$, let  
	\[  y=\{\pair{a,b,z}\in A\times A\times \va_\alpha \mid a=b\fo z\subseteq x\}. \]
The set $y$ exists by separation. Set 
	\[   \mb e:=\lambda a.\mb p a\mb {i_r}.  \]
	
It is easy to check that $y$  and $\mb e$ are as desired, once established that if $z\in \va$ and
$a=b\fo z\subseteq x$ then $z\in\va_\alpha$. This is proved by set induction by showing that for all  $u,v\in\va$:
	\begin{itemize}
		\item if $a=b\fo u\in v$ and $v\in\va_\alpha$, then $u\in\va_\beta$ for some $\beta\in\alpha$; 
		\item if $a=b\fo u=v$ and $v\in\va_\alpha$, then $u\in \va_\alpha$.
	\end{itemize}
\end{proof}

As before, we obtain the following.

\begin{corollary}
$\izf+\ac_{\ft}$ is conservative over  $\izf$ with respect to $\Pi^0_2$ sentences. 	
\end{corollary}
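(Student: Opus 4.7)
The plan is to transcribe the argument for the corresponding $\czf$ corollary almost verbatim, replacing Corollary \ref{czf choice sound} with Theorem \ref{izf choice sound}. A $\Pi^0_2$ sentence may be presented as $\forall x\in\omega\,\exists y\in\omega\,\vp(x,y)$ with $\vp$ a bounded formula whose only free variables are $x,y$. Suppose this sentence is a theorem of $\izf+\ac_{\ft}$. By Theorem \ref{izf choice sound} there is a closed application term $\mb e$ with
\[ \izf\vdash \mb e\fo \forall x\in\dot\omega\,\exists y\in\dot\omega\,\vp(x,y). \]

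The next step is to unpack the realizability clauses for the bounded quantifiers (as was done in the $\czf$ case). Since $\dot\omega=\{\pair{\bar n,\bar n,\dot n}\colon n\in\omega\}$, this yields
\[ \izf\vdash \forall n\in\omega\,\exists m\in\omega\,(\mb e\bar n)_1\fo \vp(\dot n,\dot m), \]
where the existential witness $m$ is read off from the first projection of $(\mb e\bar n)_1$.

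I then invoke the \emph{realizability-equals-truth lemma for bounded arithmetic formulas}, which was already observed in the $\czf$ conservativity proof:
\[ \forall n_1,\ldots,n_k\in\omega\,(\psi(n_1,\ldots,n_k)\biimp \exists a,b\in A\,(a=b\fo\psi(\dot n_1,\ldots,\dot n_k))), \]
for every bounded formula $\psi$ in the variables shown. This is proved by a straightforward induction on the build-up of $\psi$, using only the extensional behaviour of the canonical names $\dot n$ already established and bounded separation; in particular the argument is formalizable within $\czf$, hence a fortiori within $\izf$. Applying it to $\vp(\dot n,\dot m)$ and combining with the previous display gives $\izf\vdash \forall x\in\omega\,\exists y\in\omega\,\vp(x,y)$, which is the desired conclusion.

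There is no real obstacle: the argument is pure bookkeeping once Theorem \ref{izf choice sound} is in hand. The only point worth double-checking is that the bounded-arithmetic truth lemma goes through unchanged in the present extensional setting. This is indeed the case, because that lemma only concerns formulas whose parameters are canonical numerals $\dot n$, and the previous remark on $\dot F_o=\dot\omega$ together with the behaviour of canonical names for objects of finite type (Lemma \ref{abs}) ensures that the relation $a=b\fo\vp$ restricted to such parameters coincides with ordinary truth in the background theory.
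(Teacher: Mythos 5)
Your proposal is correct and is exactly what the paper intends: its proof of this corollary is literally ``As before,'' i.e.\ replay the $\czf$ conservativity argument with Theorem \ref{izf choice sound} in place of Corollary \ref{czf choice sound}, which is what you do. One trivial bookkeeping slip: the numeral witness $m$ is read off from $(\mb e\bar n)_0$ (the first projection of $\mb e\bar n$, which must equal $\bar m$ by the clause for bounded existentials over $\dot\omega$), not from the first projection of $(\mb e\bar n)_1$; the component $(\mb e\bar n)_1$ is the realizer of $\vp(\dot n,\dot m)$.
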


\section{Conclusions}
We defined an extensional notion of realizability that validates  $\czf$  along with all finite type axiom of choice $\ac_{\ft}$ provably in $\czf$. We have shown that one can replace $\czf$ with $\izf$. Presumably, this holds true for many other intuitionistic set theories as well.

There is a sizable number of well-known  \emph{extra principles} $P$ that can  be added to the mix, in the sense that  $T+P$ proves $\mb e\fo P$, for some closed application term $\mb e$, where  $T$ is either $\czf$ or $\izf$. This applies to arbitrary pca's in the case of large set axioms such as $\sf REA$ (Regular Extension axiom) by adapting \cite[Theorem 6.2]{R06}.
In the case of choice principles, this also applies to arbitrary pca's for Countable Choice, $\dc$ (Dependent Choice), $\rdc$ (Relativized Dependent Choice), and  $\pax$ (Presentation Axiom) by adapting the techniques of \cite{DR19}. Specializing to the case of the first Kleene algebra, one obtains extensional realizability of  $\markov$ (Markov Principle) and forms of  $\ip$ (Independence of Premise) adapting results from \cite[Section 11]{M84}, \cite{M86} , \cite[Section 7]{R06}.

We  claim that  realizability combined  with truth and the appropriate pca modeled on \cite{R05,R08}  yields the closure under the choice rule for finite types, i.e.,
\[ \text{If }T\vdash \forall x^\sigma\, \exists y^\tau\, \vp(x,y), \text{ then } T\vdash \exists f^{\sigma\tau}\, \forall x^\sigma\, \vp(x,f(x)) \]
for  large swathes of intuitionistic set theories.

Church's thesis,  
\[ \tag{{\sf CT}}\forall f\colon\omega\to\omega\,  \exists e\in\omega\, \forall x\in\omega\, ( f(x)\simeq \{e\}(x)),\]
where $\{e\}(x)$ is Turing machine application, and the finite type axiom of choice are incompatible in extensional finite type arithmetic  \cite{T77} (cf.\ \cite[Chapter 5, Theorem 6.1]{B85}).\footnote{The elementary recursion-theoretic reason that prevents Church's thesis from being extensionally realizable is the usual one: there is no type $2$ extensional index in Kleene's first algebra, that is, there is no $e\in\omega$ such that,  for all $a,b\in\omega$, if  $\{a\}(n)=\{b\}(n)$ for every $n\in\omega$, i.e., $a=_1b$, then $\{e\}(a)=\{e\}(b)$.} A fortiori, they are incompatible  on the basis of $\czf$, and  thus of $\izf$. However, negative versions of Church's thesis can still obtain in a universe in which $\ac_{\ft}$ holds. The assertion that no function from $\omega$ to $\omega$ is incomputable is known as  weak Church's thesis \cite{T73}: 
\[ \tag{{\sf WCT}}\forall f\colon\omega\to\omega\,  \neg\neg \exists e\in\omega\, \forall x\in\omega\, ( f(x)\simeq \{e\}(x)). \]
Using Kleene's first algebra, one can easily verify that {\sf WCT} is extensionally realizable in $\czf$. Therefore, $\czf$ augmented with both $\ac_{\ft}$ and ${\sf WCT}$ is consistent relative to $\czf$, and similarly for $\izf$.

Continuity principles are a hallmark of Brouwer's intuitionism. They are compatible  with finite type arithmetic (see  \cite{B85,T73,T77,Oosten97}) and also with set theory (see \cite{B85,M84,R05,R05a}).  They are known, though,  to invite  conflict with $\ac_{\ft}$ (see \cite[ Theorem 9.6.11]{TvD88}). However, as in the case of {\sf CT}, negative versions of them are likely to be compatible with $\ac_{\ft}$ on the basis of $\czf$ and $\izf$. Similar to the case of ${\sf CT}$,  one would expect that the assertion that no function from $\mathbb R$ to $\mathbb R$  is discontinuous  can go together with $\ac_{\ft}$. One obvious tool that suggests itself  here is extensional generic realizability based on Kleene's second algebra. We shall not venture into  this here and add the verification of this claim to the task list.

We conclude with the following remark. It is currently unknown whether one can provide a realizability model for  choice principles based  on larger type structures. Say that $I$ is a base if for every $I$-indexed family $(X_i)_{i\in I}$ of inhabited sets $X_i$ there exists a function $f\colon I\to \bigcup_{i\in I}X_i$ such that $f(i)\in X_i$ for every $i\in I$. Let  $\mathcal C$-$\ac$ say that every set $I$ in the class $\mathcal C$ is a base. The question is whether one can realize $\mathcal C$-$\ac$, where  $\mathcal C$ is the smallest $\Pi\Sigma$-closed class,   or even the smallest $\Pi\Sigma W$-closed class, without assuming choice in the background theory.

\bibliographystyle{plain}

\begin{thebibliography}{10}
	
	\bibitem{A86}
	Peter Aczel.
	\newblock The type theoretic interpretation of constructive set theory:
	inductive definitions.
	\newblock In {\em Logic, methodology and philosophy of science, {VII}
		({S}alzburg, 1983)}, volume 114 of {\em Stud. Logic Found. Math.}, pages
	17--49. North-Holland, Amsterdam, 1986.
	
	\bibitem{AR01}
	Peter Aczel and Michael Rathjen.
	\newblock {\em Notes on constructive set theory}.
	\newblock Technical Report~40, Institut Mittag-Leffler, 2001.
	
	\bibitem{czf2}
	Peter Aczel and Michael Rathjen.
	\newblock Notes on constructive set theory.
	\newblock Available at \url{http://www1.maths.leeds.ac.uk/~rathjen/book.pdf},
	2010.
	
	\bibitem{B79}
	Michael~J. Beeson.
	\newblock Continuity in intuitionistic set theories.
	\newblock In {\em Studies in Logic and the Foundations of Mathematics},
	volume~97, pages 1--52. Elsevier, 1979.
	
	\bibitem{B85}
	Michael~J. Beeson.
	\newblock {\em Foundations of constructive mathematics}, volume~6 of {\em
		Ergebnisse der Mathematik und ihrer Grenzgebiete (3) [Results in Mathematics
		and Related Areas (3)]}.
	\newblock Springer-Verlag, Berlin, 1985.
	
	\bibitem{Curry29}
	Haskell~Brooks Curry.
	\newblock An analysis of logical substitution.
	\newblock {\em American Journal of Mathematics}, 51:509--536, 789--834, 1929.
	
	\bibitem{Curry30}
	Haskell~Brooks Curry.
	\newblock Grundlagen der kombinatorischen {L}ogik.
	\newblock {\em American Journal of Mathematics}, 51:363--384, 1930.
	
	\bibitem{DR19}
	Eman Dihoum and Michael Rathjen.
	\newblock {Preservation of choice principles under realizability}.
	\newblock {\em Logic Journal of the IGPL}, 27(5):746--765, 02 2019.
	
	\bibitem{F75}
	Solomon Feferman.
	\newblock A language and axioms for explicit mathematics.
	\newblock In {\em Algebra and logic}, pages 87--139. Springer, 1975.
	
	\bibitem{F78a}
	Solomon Feferman.
	\newblock Recursion theory and set theory: a marriage of convenience.
	\newblock In {\em Generalized {R}ecursion {T}heory II}, Stud. Logic Found.
	Math., pages 55--98. North-Holland, Amsterdam, 1978.
	
	\bibitem{F79}
	Solomon Feferman.
	\newblock Constructive theories of functions and classes.
	\newblock In {\em Logic {C}olloquium '78}, Stud. Logic Found. Math., pages
	159--224. North-Holland, Amsterdam, 1979.
	
	\bibitem{friedman_scedrov84}
	H.~Friedman and A.~\v{S}\v{c}edrov.
	\newblock Large sets in intuitionistic set theory.
	\newblock {\em Annals of Pure and Applied Logic}, 27:1--24, 1984.
	
	\bibitem{friedman73}
	Harvey Friedman.
	\newblock The consistency of classical set theory relative to a set theory with
	intuitionistic logic.
	\newblock {\em Journal of Symbolic Logic}, 38:315--319, 1973.
	
	\bibitem{F73}
	Harvey Friedman.
	\newblock Some applications of {K}leene's methods for intuitionistic systems.
	\newblock In {\em Cambridge summer school in mathematical logic}, pages
	113--170. Springer, 1973.
	
	\bibitem{F19}
	Emanuele Frittaion.
	\newblock On {G}oodman realizability.
	\newblock {\em Notre Dame J. Formal Logic}, 60(3):523--550, 08 2019.
	
	\bibitem{gordeev}
	Lew Gordeev.
	\newblock Proof-theoretical analysis of weak systems of functions and classes.
	\newblock {\em Annals of Pure and Applied Logic}, 38:1--121, 1988.
	
	\bibitem{Grayson}
	R.J. Grayson.
	\newblock Note on extensional realizability, 1981.
	\newblock {\em Handwritten notes from M{\"u}nster University}.
	
	\bibitem{KTr70}
	G.~Kreisel and A.S. Troelstra.
	\newblock Formal systems for some branches of intuitionistic analysis.
	\newblock {\em Annals of Mathematical Logic}, 1:229--387, 1970.
	
	\bibitem{M84}
	David~Charles McCarty.
	\newblock {\em Realizability and recursive mathematics}.
	\newblock Thesis (Ph.D.)--The University of Edinburgh, 1985.
	
	\bibitem{M86}
	David~Charles McCarty.
	\newblock Realizability and recursive set theory.
	\newblock {\em Ann. Pure Appl. Logic}, 32(2):153--183, 1986.
	
	\bibitem{Pitts}
	Andrew~M. Pitts.
	\newblock {\em The theory of triposes}.
	\newblock Thesis (Ph.D.)--University of Cambridge, 1981.
	
	\bibitem{R05a}
	Michael Rathjen.
	\newblock Constructive set theory and {B}rouwerian principles.
	\newblock {\em J.UCS}, 11(12):2008--2033, 2005.
	
	\bibitem{R05}
	Michael Rathjen.
	\newblock The disjunction and related properties for constructive
	{Z}ermelo-{F}raenkel set theory.
	\newblock {\em J. Symbolic Logic}, 70(4):1232--1254, 2005.
	
	\bibitem{R03m}
	Michael Rathjen.
	\newblock The formulae-as-classes interpretation of constructive set theory.
	\newblock In {\em Proof Technology and Computation}, pages 279--322. IOS Press,
	Amsterdam, 2006.
	
	\bibitem{R06}
	Michael Rathjen.
	\newblock Realizability for constructive {Z}ermelo-{F}raenkel set theory.
	\newblock In {\em Logic {C}olloquium '03}, volume~24 of {\em Lect. Notes Log.},
	pages 282--314. Assoc. Symbol. Logic, La Jolla, CA, 2006.
	
	\bibitem{R08}
	Michael Rathjen.
	\newblock Metamathematical properties of intuitionistic set theories with
	choice principles.
	\newblock In {\em New computational paradigms}, pages 287--312. Springer, New
	York, 2008.
	
	\bibitem{Schoen24}
	Moses Sch\"onfinkel.
	\newblock {\"U}ber die {B}austeine der mathematischen {L}ogik.
	\newblock {\em Mathematische Annalen}, 92:305--316, 1924.
	
	\bibitem{S14}
	Andrew~W. Swan.
	\newblock C{ZF} does not have the existence property.
	\newblock {\em Ann. Pure Appl. Logic}, 165(5):1115--1147, 2014.
	
	\bibitem{TvD88}
	Anne Troelstra and Dirk van Dalen.
	\newblock {\em Constructivism in {M}athematics, volume II}, volume 123 of {\em
		Studies in Logic and the Foundations of Mathematics}.
	\newblock Elsevier B. V., Amsterdam, 1988.
	
	\bibitem{T73}
	A.S. Troelstra.
	\newblock {\em Metamathematical investigation of intuitionistic arithmetic and
		analysis}.
	\newblock Lecture Notes in Mathematics, Vol. 344. Springer-Verlag, Berlin,
	1973.
	
	\bibitem{T77}
	A.S. Troelstra.
	\newblock A note on non-extensional operations in connection with continuity
	and recursiveness.
	\newblock {\em Indagationes Mathematicae (Proceedings)}, 80(5):455 -- 462,
	1977.
	
	\bibitem{T98}
	A.S. Troelstra.
	\newblock Realizability.
	\newblock In {\em Handbook of proof theory}, volume 137 of {\em Stud. Logic
		Found. Math.}, pages 407--473. North-Holland, Amsterdam, 1998.
	
	\bibitem{BS18}
	Benno van~den Berg and Lotte van Slooten.
	\newblock Arithmetical conservation results.
	\newblock {\em Indag. Math. (N.S.)}, 29(1):260--275, 2018.
	
	\bibitem{Oosten97}
	Jaap van Oosten.
	\newblock Extensional realizability.
	\newblock {\em Annals of Pure and Applied Logic}, 84(3):317 -- 349, 1997.
	
	\bibitem{Oosten08}
	Jaap van Oosten.
	\newblock {\em Realizability: an introduction to its categorical side}, volume
	152 of {\em Studies in Logic and the Foundations of Mathematics}.
	\newblock Elsevier B. V., Amsterdam, 2008.
	
\end{thebibliography}

\end{document}